\newtheorem{teo}{Theorem}[section]
\newtheorem{prop}[teo]{Proposition}
\newtheorem{lemma}[teo]{Lemma}
\newtheorem{re}[teo]{Remark}
\newtheorem{cor}[teo]{Corollary}
\numberwithin{equation}{section}
\newcommand{\Real}{\mathbb R}
\newcommand{\Img}{\mbox{ Im}\,}
\author{\textsc{Luccas Campos and Mykael Cardoso}}
\begin{document}

\title{On the critical norm concentration for the inhomogeneous nonlinear Schrödinger equation}
\date{}

\maketitle
\begin{abstract}\noindent
We consider the inhomogeneous nonlinear Schrödiger equation (INLS) in $\mathbb{R}^N$
$$i \partial u_t + \Delta u + |x|^{-b} |u|^{2\sigma}u = 0,$$
and show the $L^2$-norm concentration for the finite time blow-up solutions in the $L^2$-critical case, $\sigma=\frac{2-b}{N}$. Moreover, we provide an alternative for the classification of minimal mass blow-up solutions first proved by Genoud and Combet \citep{Cmb}. For the case $\frac{2-b}{N} < \sigma < \frac{2-b}{N-2}$, we show results regarding the $L^p$-critical norm concentration, generalizing the argument of Holmer and Roudenko \citep{holmer2007blow} to the INLS setting.
\end{abstract}

\section{Introduction}\label{s_Introduction}
In this paper, we consider the initial value problem (IVP) for the inhomogenous nonlinear Schr\"odinger (INLS) equation 
\begin{equation}
\begin{cases}
i \partial u_t + \Delta u + |x|^{-b} |u|^{2 \sigma}u = 0, \,\,\, x \in \mathbb{R}^N, \,t>0,\\
u(\cdot,0) = u_0 \in H^1(\mathbb{R}^N),
\end{cases}
\label{INLS}
\end{equation}
where $\sigma>0$ and $b>0$. This model arises naturally in nonlinear optics for the propagation of laser beams.
The case $b = 0$ is the classical nonlinear Schrödinger equation (NLS), extensively studied in recent years (see Sulem-Sulem \citep{Sulem}, Bourgain \citep{Bo99}, Cazenave \citep{cazenave}, Linares-Ponce \citep{LiPo15}, Fibich \citep{Fi15} and the references therein).
The critical Sobolev index where one can expect well-posedness for this model is given by scaling. First, note that if
$u(x, t)$ is a solution of (\ref{INLS}), then
\begin{equation}\label{Scalinv}
u_\rho(x,t) = \rho^\frac{2-b}{2\sigma} u(\rho x, \rho^2 t)
\end{equation}
is also a solution, for all $\rho > 0$. Computing the homogeneous Sobolev norm, we have
\begin{equation*}
\|u_\rho(\cdot,0)\|_{\dot{H}^s} = \rho^{s - \left(\frac{N}{2} - \frac{2-b}{2\sigma}\right)}\|u_0\|_{\dot{H}^s}.
\end{equation*}
Hence, the critical Sobolev index is the one which leaves the scaling symmetry invariant, that is
$$
s_c = \frac{N}{2}-\frac{2-b}{2\sigma}.
$$ 
In this paper, we are interested in the case $0 \leq s_c <1$. The case $s_c = 0$ is known as the \textit{$L^2$-critical}. Rewriting this condition in terms of $\sigma$, we obtain
\begin{equation*}
\sigma = \frac{2-b}{N}.
\end{equation*}
On the other hand, the case $0<s_c <1$ is known as the \textit{$L^2$-supercritical} and \textit{$H^1$-subcritical} (or just \textit{intercritical}). Again, we can reformulate this condition in terms of $\sigma$ as
$$
\frac{2-b}{N}<\sigma<\sigma^*_b,
$$
where
\begin{equation}\label{sigma_*}
\sigma^*_b = \begin{cases} \infty, & N \leq 2  \\
\frac{2-b}{N-2}, & N \geq 3. \\
\end{cases}
\end{equation}
The local well-posedness for the INLS equation was first studied by Genoud-Stuart in \citep{g_8} (see also Genoud \citep{g_6}) by the abstract theory of Cazenave \citep{cazenave}, without relying on Strichartz type inequalities. They analyzed the IVP \eqref{INLS} in the sense of distributions, that is, $i\partial_tu+\Delta u+|x|^{-b}|u|^{2\sigma}u=0 $ in $H^{-1}(\mathbb{R}^N)$ and showed, with $0 < b < 2$, it is well-posed
\begin{itemize}
        \item[-] locally if $0 < \sigma < \sigma^*_b$ ($s_c < 1$);
        \item[-] globally for any initial data in $H^1(\Real^{N})$ if $\sigma < \frac{2-b}{N}$ ($s_c < 0$);
        \item[-] globally for sufficiently small initial data if $\frac{2-b}{N}  \leq \sigma < \sigma_b^*$ ($0 \leq s_c < 1$).
\end{itemize}
More recently, Guzmán \citep{Boa} established local well-posedness of the INLS in $H^s(\Real^{N})$ based on Strichartz estimates. In particular, setting 
$$
\tilde{2} = \begin{cases} \frac{N}{3}, & N \leq 3  \\
2, & N \geq 4, \\
\end{cases}
$$
he proved that, for $N \geq 2$, $0 < \sigma < \sigma_b^*$ and $0 < b < \tilde{2}$, the initial value problem \eqref{INLS} is locally well-posed in $H^1(\Real^N)$. Dinh \citep{Boa_Dinh} improved Guzmán's results in dimensions $N=2$ (for $0<b<1$ and $0 < \sigma < \sigma_b^*)$ and  $N = 3$ (for $0 < b < \frac{3}{2}$ and $0 < \sigma < \frac{3-2b}{2b-1}$). Note that the results of Guzmán \citep{Boa} and Dinh \citep{Boa_Dinh} do not treat the case $N = 1$, and the ranges of $b$ are more restricted than those in the results of Genoud-Stuart \citep{g_8}. However, Guzmán and Dinh give more detail information on the solutions, showing that there exists $T(\|u_0\|_{H^1})>0$ such that $u \in L^q\left([-T,T];L^r(\Real^N)\right)$ for any $L^2$-admissible pair $(q,r)$ satisfying
\begin{equation*}
\frac{2}{q}=\frac{N}{2}-\frac{N}{r},
\end{equation*}
where 
\begin{equation*}
\left\{\begin{array}{cl}
2\leq & r  \leq \frac{2N}{N-2}\hspace{0.5cm}\textnormal{if}\;\;\;  N\geq 3,\\
2 \leq  & r < +\infty\;  \hspace{0.5cm}\textnormal{if}\;\; \;N=2,\\
2 \leq & r \leq + \infty\;  \hspace{0.5cm}\textnormal{if}\;\;\;N=1.
\end{array}\right.
\end{equation*}

Let $T^+(u)$ be the maximal positive time of existence for a solution $u$ to \eqref{INLS}. To simplify the notation we only write $T$ to denote $T^+(u)$. If $T=+\infty$, we say that the solution is global, and if $T<+\infty$, we say that the solution $u$ blows up in finite time. In the latter case, a scaling argument gives the following lower bound
\begin{equation}\label{lowerbound}
\|\nabla u(t)\|_{L^2} \geq \frac{C}{(T - t)^\frac{1-s_c}{2}     }.
\end{equation}
In particular, $\|u(t)\|_{H^1}\to +\infty$ when $t\nearrow T$.

The solutions to (\ref{INLS}) have the following conserved quantities
\begin{equation}\label{mass}
M\left[u(t) \right] = \int |u(t)|^2 dx = M[u_0],
\end{equation}
\begin{equation}\label{energy}
E\left[u(t) \right] = \frac{1}{2}\int |\nabla u(t)|^2 dx - \frac{1}{2 \sigma+2} \int |x|^{-b}|u(t)|^{2 \sigma+2} dx = E[u_0].
\end{equation}

In addition to the \emph{scaling invariance} given by \eqref{Scalinv},  there are more symmetries for \eqref{INLS}. Indeed, if $u(x,t)$ is a solution of \eqref{INLS}, so are the following
\begin{itemize}
\item[(a)] $v_1(x,t)=u(x,t-t_0)$, for all $t_0\in \Real$ (\emph{time translation invariance}),
\item[(b)] $v_2(x,t)=e^{i\gamma}u(x,t)$ for all $\gamma\in \Real$ (\emph{phase invariance}).
\end{itemize}
Moreover, in the $L^2$-critical case $\left(\sigma=\frac{2-b}{N}\right)$, we have one more invariance, the so-called pseudoconformal transformation (see Combet-Genoud \cite[Lemma 9]{Cmb})
\begin{equation*}
v(x,t)=\frac{1}{|t|^{\frac{N}{2}}}\overline{u}\left(\frac{x}{t},\frac{1}{t}\right)e^{-i\frac{|x|^2}{4t}}.
\end{equation*}

Note that, unlike the NLS equation, the space translation invariance is broken for the INLS model due to the presence of the term $|x|^{-b}$ in the nonlinear term.

Also, if $u_0 \in \Sigma = \left\{ f \in H^1(\mathbb{R}^N) ; |x|f \in L^2(\mathbb{R}^N)\right\}$, then the solution satisfies the \textit{virial identity} (see Farah \citep[Proposition 4.1]{Farah_well})
\begin{equation}
\frac{d^2}{dt^2} \int |x|^2 |u|^2\, dx = 8 (N \sigma +b)E\left[u_0 \right]-4(N \sigma +b -2) \int \left|\nabla u \right|^2\,dx.
\label{virial}
\end{equation}

From this identity, we immediately see that, if $\sigma > \frac{2-b}{N}$ and $E\left[u_0 \right] < 0$, then the graph of $t \mapsto \int |x|^2 |u|^2$ lies below an inverted parabola, which becomes negative in finite time. Therefore, the solution cannot exist globally and blows up in finite time.

The blow-up theory is related to the concept of \textit{ground state}, which is the unique positive radial solution of the elliptic problem
\begin{equation}
\Delta Q - Q + |x|^{-b}|Q|^{2 \sigma} Q = 0.
\label{ground_state}
\end{equation}

The existence of the ground state is proved by Genoud-Stuart \citep{g_5, g_8} for dimension $N \geq 2$, and by Genoud \citep{g_6} for $N = 1$. Uniqueness was proved in dimension $N \geq 3$ by Yanagida \citep{g_19} (see also Genoud \citep{g_5}), in dimension $N = 2$ by Genoud \citep{g_7} and in dimension $N = 1$ by Toland \citep{g_16}. The existence and uniqueness hold for $0 < b < \tilde{2}$ and $0 < \sigma < \sigma^*_b$.

The ground state satisfies the following Pohozaev's identities (see relations (1.9)-(1.10) in Farah \citep{Farah_well})
\begin{align}
&\int |\nabla Q|^{2}\,dx= \left(\frac{N\sigma + b}{2\sigma +2 -(N\sigma+b)}\right)\|Q\|_{L^2}^2,
\label{pohozaev}\\
&\int |x|^{-b}|Q|^{2\sigma+2}\,dx = \left(\frac{2\sigma + 2}{2\sigma +2 -(N\sigma+b)}\right) \|Q\|_{L^2}^2.
\label{pohozaev2}
\end{align}

In \cite{Farah_well}, Farah proved the following sharp Gagliardo-Nirenberg inequality, valid for $0 \leq s_c < 1$ and $0 < b < 2$
\begin{equation}
\int |x|^{-b}|u|^{2\sigma+2}\,dx \leq K_{opt} \|\nabla u\|_{L^2}^{N\sigma +b} \| u\|_{L^2}^{2\sigma + 2-(N\sigma+b)},
\label{gagliardo}
\end{equation}

where the sharp constant $K_{opt}$ is given by
\begin{equation}
K_{opt} = \left(\frac{N\sigma + b}{2\sigma +2 -(N\sigma+b)}\right)^\frac{2-(N\sigma+b)}{2} \frac{2\sigma +2}{(N\sigma + b) \|Q\|_{L^2}^{2\sigma}}.
\label{gagliardo_opt}
\end{equation}

This inequality can be seen as an extension to the case $b > 0$ of the classical Gagliardo-Nirenberg inequality. It is also an extension of the inequality obtained by Genoud \citep{Genoud1}, who showed its validity for $\sigma = \frac{2-b}{N}$.

We will first consider the $L^2$-critical case. By  energy conservation \eqref{energy} and Gagliardo-Nirenberg inequality \eqref{gagliardo}, if $u(t)$ is the $H^1(\Real^N)$-solution of \eqref{INLS} with initial data satisfying $\|u_0\|_{L^2}< \|Q\|_{L^2}$, then
$$E[u_0]\geq\frac{1}{2}\|\nabla u(t)\|_{L^2}^2\left[1-\left(\frac{\left\|u_0\right\|_{L^2}}{\|Q\|_{L^2}}\right)^{\frac{4-2b}{N}}\right],$$
which implies that $u(t)$ is global, i.e., $T=+\infty$. Thus, the only possible finite time blow-up solutions of \eqref{INLS} must have $\|u_0\|_{L^2}\geq \|Q\|_{L^2}$. Moreover, we say that $u$ is a mass critical solution if $\|u_{0}\|_{L^2}=\|Q\|_{L^2}$. Merle-Tsutsumi \citep{MT} (in the radial case) and Weinstein \citep{W_1989} (in the general case), considered the $L^2$-critical NLS and showed that every finite time blow-up solution must concentrate the $L^2(\mathbb{R}^N)$ norm.

In this paper, in the same spirit of Hmidi-Keraani \citep{keraani}, we also prove mass concentration for the $L^2$-critical INLS equation. More precisely, we show the following.
\begin{teo}\label{massconcentration}
Let $u$ be a solution of (\ref{INLS}) in the $L^2$-critical case ($\sigma=\frac{2-b}{N}$), which blows up in finite positive time $T$ and $\lambda(t) > 0$ be any function such that $\lambda(t) \|\nabla u(t)\|_2 \rightarrow +\infty$ as $t\nearrow T$. Then, there exists $x(t) \in \mathbb{R}^N$ such that
\begin{equation*}
\liminf_{t \nearrow T} \int_{|x-x(t)|<\lambda(t)} |u(x,t)|^2 dx \geq \|Q\|^2_{L^2\left(\mathbb{R}^N\right)}.
\end{equation*}
\end{teo}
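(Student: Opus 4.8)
The plan is to adapt to the weight $|x|^{-b}$ the rescaling-and-compactness scheme of Hmidi--Keraani \citep{keraani}. Fix an arbitrary sequence $t_n\nearrow T$ and set $\rho_n:=\|\nabla u(t_n)\|_{L^2}^{-1}$, so $\rho_n\to 0$ by the blow-up hypothesis. Let
\[
v_n(x):=\rho_n^{N/2}\,u(\rho_n x,t_n).
\]
Because $\frac{2-b}{2\sigma}=\frac N2$ in the $L^2$-critical case, $v_n$ is the $L^2$-invariant rescaling of $u(t_n)$, so mass conservation \eqref{mass} gives $\|v_n\|_{L^2}=\|u_0\|_{L^2}$, while $\|\nabla v_n\|_{L^2}=\rho_n\|\nabla u(t_n)\|_{L^2}=1$. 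Using $N\sigma+b=2$, a change of variables yields $\int|x|^{-b}|v_n|^{2\sigma+2}\,dx=\rho_n^{2}\int|x|^{-b}|u(t_n)|^{2\sigma+2}\,dx$, and substituting the energy identity \eqref{energy},
\[
\int|x|^{-b}|v_n|^{2\sigma+2}\,dx=(\sigma+1)-\frac{(2\sigma+2)\,E[u_0]}{\|\nabla u(t_n)\|_{L^2}^{2}}\ \longrightarrow\ \sigma+1\qquad(n\to\infty),
\]
the limit being equal to $\|\nabla Q\|_{L^2}^{-2}\int|x|^{-b}|Q|^{2\sigma+2}\,dx$ by the Pohozaev identities \eqref{pohozaev}--\eqref{pohozaev2}. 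Thus $(v_n)$ is bounded in $H^1$, with $\|\nabla v_n\|_{L^2}=1$ and $\int|x|^{-b}|v_n|^{2\sigma+2}\,dx\to\sigma+1$ --- exactly the input for a refined compactness lemma.

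The heart of the proof is such a lemma, in the spirit of \citep{keraani}: \emph{if $(w_n)$ is bounded in $H^1(\mathbb{R}^N)$ with $\limsup_n\|\nabla w_n\|_{L^2}^{2}\le M$ and $\limsup_n\int|x|^{-b}|w_n|^{2\sigma+2}\,dx\ge m>0$, then there exist $(y_n)\subset\mathbb{R}^N$ and $V\in H^1(\mathbb{R}^N)$ such that, up to a subsequence, $w_n(\cdot+y_n)\rightharpoonup V$ weakly in $H^1$ and $\|V\|_{L^2}^{2\sigma}\ge m/(K_{opt}M)$.} Applied to $(v_n)$ with $M=1$, $m=\sigma+1$ and $K_{opt}=(\sigma+1)\|Q\|_{L^2}^{-2\sigma}$ (the value of \eqref{gagliardo_opt} when $N\sigma+b=2$), it yields $(y_n)$ and $V$ with $v_n(\cdot+y_n)\rightharpoonup V$ and $\|V\|_{L^2}\ge\|Q\|_{L^2}$. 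To prove the lemma I would take a profile decomposition of $(w_n)$ with respect to the scaling--translation group (as in \citep{keraani}), expand $\int|x|^{-b}|w_n|^{2\sigma+2}$ over the profiles in Brezis--Lieb fashion, and apply the sharp weighted inequality \eqref{gagliardo}--\eqref{gagliardo_opt} profile by profile. The hard part will be the failure of translation invariance of $|x|^{-b}$: one must show that profiles which spread, concentrate, or whose cores escape to infinity contribute asymptotically nothing to $\int|x|^{-b}|w_n|^{2\sigma+2}$ --- the escaping case because the weight is then uniformly small on the bulk of the profile, where one must also absorb the local singularity of $|x|^{-b}$ at the origin (harmless, being exactly the integrability underlying \eqref{gagliardo}). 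It follows that a single unit-scale profile, with \emph{bounded} center, carries all of the weighted nonlinearity in the limit, and then \eqref{gagliardo}--\eqref{gagliardo_opt} forces its $L^2$-mass to be at least $\|Q\|_{L^2}$.

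It remains to transfer this back to $u$. Set $x(t_n):=\rho_n y_n$. For fixed $R>0$, weak $H^1$ convergence of $v_n(\cdot+y_n)$ gives weak $L^2$ convergence on $\{|x|<R\}$, so by lower semicontinuity of the $L^2$-norm there and the substitution $z=\rho_n(x+y_n)$,
\[
\int_{|x|<R}|V|^{2}\,dx\ \le\ \liminf_{n\to\infty}\int_{|x|<R}|v_n(x+y_n)|^{2}\,dx\ =\ \liminf_{n\to\infty}\int_{|z-x(t_n)|<\rho_n R}|u(z,t_n)|^{2}\,dz .
\]
Since $\rho_n R/\lambda(t_n)=R\big(\lambda(t_n)\|\nabla u(t_n)\|_{L^2}\big)^{-1}\to0$, we have $\rho_n R<\lambda(t_n)$ for $n$ large, so the right-hand side is $\le\liminf_n\sup_{y\in\mathbb{R}^N}\int_{|z-y|<\lambda(t_n)}|u(z,t_n)|^{2}\,dz$. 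Letting $R\to\infty$ and using $\|V\|_{L^2}\ge\|Q\|_{L^2}$ gives, along the chosen subsequence,
\[
\liminf_{n\to\infty}\ \sup_{y\in\mathbb{R}^N}\int_{|z-y|<\lambda(t_n)}|u(z,t_n)|^{2}\,dz\ \ge\ \|Q\|_{L^2}^{2}.
\]
As $t_n\nearrow T$ was arbitrary, a standard contradiction argument (negate the inequality along some sequence, then run the above) upgrades it to $\liminf_{t\nearrow T}\sup_{y\in\mathbb{R}^N}\int_{|z-y|<\lambda(t)}|u(z,t)|^{2}\,dz\ge\|Q\|_{L^2}^{2}$. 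Finally, for each $t<T$ the map $y\mapsto\int_{|z-y|<\lambda(t)}|u(z,t)|^{2}\,dz$ is continuous and tends to $0$ as $|y|\to\infty$ (because $u(t)\in L^2$), so it attains its supremum at some $x(t)\in\mathbb{R}^N$; with this choice the assertion follows.
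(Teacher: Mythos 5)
Your proposal is correct and follows essentially the same route as the paper: rescale $u(t_n)$ to a bounded $H^1$ sequence with unit gradient norm and weighted $L^{2\sigma+2}$ integral tending to $\sigma+1$, invoke a translation-compactness lemma (the paper's Theorem \ref{teo1}, proved in the Appendix via the Hmidi--Keraani profile decomposition together with a H\"older estimate absorbing the singularity of $|x|^{-b}$ near the origin) to extract a weak limit $V$ with $\|V\|_{L^2}\ge\|Q\|_{L^2}$ via the sharp Gagliardo--Nirenberg constant, and transfer back using weak lower semicontinuity, the condition $\lambda(t_n)\|\nabla u(t_n)\|_{L^2}\to\infty$, and attainment of the supremum by continuity and decay. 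The only cosmetic differences are your normalization $\rho_n=\|\nabla u(t_n)\|_{L^2}^{-1}$ versus the paper's $\rho_n=\|\nabla Q\|_{L^2}\|\nabla u(t_n)\|_{L^2}^{-1}$, and that your sketch of the compactness lemma invokes a scaling--translation profile decomposition with a discussion of escaping cores, whereas the paper needs only the pure translation decomposition and bounds each profile's weighted term directly by the (translation-insensitive) right-hand side of \eqref{gagliardo}.
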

\begin{re} From lower bound on the blow-up rate \eqref{lowerbound}, it is possible to deduce the existence of $\lambda(t)$ such that $\lambda(t)\rightarrow 0$, as $t\rightarrow T$. Indeed, $\lambda(t)=(T-t)^{\alpha}$ with $0<\alpha<1/2$ satisfies the assumptions of Theorem \ref{massconcentration}.
\end{re}

The main ingredient to prove Theorem \ref{massconcentration} is the following compactness theorem.
\begin{teo}\label{teo1}
Let $\left\{v_n\right\}_{n = 1}^\infty$ be a bounded family in $H^1 \left(\mathbb{R}^N\right)$ such that
\begin{equation}
\limsup_{n \rightarrow  \infty} \|\nabla v_n\|_{L^2} \leq M, \,\,\,\,\, \limsup_{n \rightarrow \infty} \left\||\cdot|^\frac{-b}{2\sigma+2}v_n\right\|_{L^{2\sigma+2}} \geq m,
\label{teo1_bound}
\end{equation}
for some $m$, $M > 0$. Then there exist $\left\{x_n\right\}_{n = 1}^\infty \subset \mathbb{R}^N$ and $V \in H^1\left(\mathbb{R}^N\right)$ such that, up to a subsequence,
\begin{equation*}
v_n( \cdot + x_n ) \rightharpoonup V
\label{}
\end{equation*}

weakly in $H^1$. Moreover, in the mass-critical setting ($s_c = 0$), one has
\begin{equation}\label{eq_117}
\|V\|_{L^2} \geq C(M,m) \|Q\|_{L^2},
\end{equation}

where
\begin{equation}\label{CMn}
        C(M,m) = \left(\frac{m^{\frac{4-2b}{N}+2}N}{M^2(2-b+N)}\right)^\frac{N}{2(2-b)}.
\end{equation}
\end{teo}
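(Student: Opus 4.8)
The plan is to obtain the entire statement with $x_n\equiv 0$: since $\{v_n\}$ is bounded in $H^1(\Real^N)$, weak compactness yields, after passing to a subsequence, $v_n\rightharpoonup V$ in $H^1(\Real^N)$ for some $V$, which is the first assertion; the real content is the mass-critical bound \eqref{eq_117}, which I would prove directly, \emph{without} the profile decomposition of Hmidi-Keraani \citep{keraani} needed for the homogeneous NLS. The structural point that makes translations unnecessary is that the weight $|x|^{-b}$ decays at infinity, so $\int_{\Real^N}|x|^{-b}|v_n|^{2\sigma+2}\,dx$ is automatically concentrated in a large fixed ball, and then a soft weak-limit argument together with the sharp Gagliardo-Nirenberg inequality \eqref{gagliardo} produces the estimate; along the way it will be clear that $V\neq 0$.

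First I would localize the weighted integral. Since $\sigma<\sigma^*_b$, the Sobolev embedding $H^1(\Real^N)\hookrightarrow L^{2\sigma+2}(\Real^N)$ holds, so there is $A>0$ with $\|v_n\|_{L^{2\sigma+2}}\le A$ for all $n$. Hence, for every $R>0$,
\begin{equation*}
\int_{|x|>R}|x|^{-b}|v_n|^{2\sigma+2}\,dx\le R^{-b}\|v_n\|_{L^{2\sigma+2}}^{2\sigma+2}\le R^{-b}A^{2\sigma+2}
\end{equation*}
uniformly in $n$, so that, by \eqref{teo1_bound},
\begin{equation*}
\limsup_{n\to\infty}\int_{|x|\le R}|x|^{-b}|v_n|^{2\sigma+2}\,dx\ \ge\ m^{2\sigma+2}-R^{-b}A^{2\sigma+2}.
\end{equation*}

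Next I would pass to the weak limit inside the ball. Fix $R>0$. By Rellich-Kondrachov, $v_n\to V$ strongly in $L^p(B_R)$ for every $p$ strictly below the $H^1$-critical Lebesgue exponent (which is $\frac{2N}{N-2}$ for $N\ge3$ and $+\infty$ for $N\le2$). Choose $q\in(1,N/b)$ (possible since $b<N$) large enough that $(2\sigma+2)q'$ is still strictly below that exponent; this is possible exactly because $(2\sigma+2)(N-2)<2(N-b)$, which is equivalent to $\sigma<\sigma^*_b$. Then $|x|^{-b}\in L^q(B_R)$, and combining the elementary inequality $\bigl||a|^{p}-|b|^{p}\bigr|\le C_p\bigl(|a|^{p-1}+|b|^{p-1}\bigr)|a-b|$ for $p=2\sigma+2$ with H\"older's inequality and the strong $L^{(2\sigma+2)q'}(B_R)$ convergence of $v_n$ to $V$, we get
\begin{equation*}
\int_{B_R}|x|^{-b}|v_n|^{2\sigma+2}\,dx\ \longrightarrow\ \int_{B_R}|x|^{-b}|V|^{2\sigma+2}\,dx .
\end{equation*}
Together with the last display of the preceding paragraph this gives $\int_{B_R}|x|^{-b}|V|^{2\sigma+2}\,dx\ge m^{2\sigma+2}-R^{-b}A^{2\sigma+2}$ for all $R>0$; letting $R\to\infty$ (monotone convergence) yields $\int_{\Real^N}|x|^{-b}|V|^{2\sigma+2}\,dx\ge m^{2\sigma+2}>0$, so in particular $V\neq 0$.

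Finally I would close the estimate with \eqref{gagliardo}. Weak lower semicontinuity of $\|\nabla\cdot\|_{L^2}$ gives $\|\nabla V\|_{L^2}\le\liminf_{n}\|\nabla v_n\|_{L^2}\le M$, so, using $N\sigma+b=2$,
\begin{equation*}
m^{2\sigma+2}\le\int_{\Real^N}|x|^{-b}|V|^{2\sigma+2}\,dx\le K_{opt}\,\|\nabla V\|_{L^2}^{2}\,\|V\|_{L^2}^{2\sigma}\le K_{opt}\,M^2\,\|V\|_{L^2}^{2\sigma},
\end{equation*}
whence $\|V\|_{L^2}^{2\sigma}\ge m^{2\sigma+2}/(K_{opt}M^2)$. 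In the mass-critical case the exponent $\frac{2-(N\sigma+b)}{2}$ in \eqref{gagliardo_opt} vanishes, so $K_{opt}=(\sigma+1)\|Q\|_{L^2}^{-2\sigma}$ with $\sigma+1=\frac{N+2-b}{N}$; substituting this and using the identities $2\sigma+2=\frac{4-2b}{N}+2$ and $\frac{1}{2\sigma}=\frac{N}{2(2-b)}$ turns the bound into exactly \eqref{eq_117} with $C(M,m)$ as in \eqref{CMn}. I expect the main obstacle to be precisely this passage to the limit in the weighted nonlinearity: one must control the tail $\{|x|>R\}$ through the decay of $|x|^{-b}$ and the uniform Sobolev bound, and simultaneously control the region near the origin through Rellich compactness together with a H\"older estimate whose exponents are admissible exactly in the range $\sigma<\sigma^*_b$; the remaining steps are soft functional analysis or a direct substitution into \eqref{gagliardo_opt}.
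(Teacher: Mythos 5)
Your proof is correct, and it takes a genuinely different and more elementary route than the paper. The paper imports the profile decomposition of Hmidi--Keraani \citep{keraani} (stated as Theorem \ref{profile} in the Appendix), supplements it with an estimate showing that the remainder $v_n^l$ is negligible in the weighted norm (the analogue of your H\"older step near the origin, with the same exponent bookkeeping requiring $\sigma<\sigma_b^*$ and $b<N$), invokes G\'erard's elementary inequality and the pairwise orthogonality of the translation cores to discard cross terms, and finally identifies the weak limit after recentering as the dominant profile $V^{j_0}$. You instead observe that the decay of $|x|^{-b}$ pins the weighted $L^{2\sigma+2}$ mass inside a fixed ball uniformly in $n$ (your bound $R^{-b}A^{2\sigma+2}$ on the tail), so that no translations are needed: local Rellich compactness plus the same H\"older estimate passes the weighted nonlinearity to the limit, and the sharp Gagliardo--Nirenberg inequality \eqref{gagliardo} with $N\sigma+b=2$ yields \eqref{eq_117} with exactly the constant \eqref{CMn}. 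This exploits precisely the broken translation invariance of the INLS, which is what makes the profile machinery avoidable here; the paper's approach, by contrast, runs in parallel with the NLS argument and keeps the decomposition available for settings where the mass could escape by translation. Two minor points to tidy up: extract the subsequence realizing the $\limsup$ in \eqref{teo1_bound} \emph{before} passing to the weakly convergent (and locally strongly convergent, via a diagonal argument over $R$) subsequence, so that your lower bound on $\int_{B_R}|x|^{-b}|V|^{2\sigma+2}\,dx$ is justified along a single subsequence; and note that your argument, like the paper's, requires $b<N$, so neither proof covers $N=1$ with $b\geq 1$.
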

\begin{re}
The lower bound on the $L^2(\mathbb{R}^N)$ norm of $V$ \eqref{eq_117} is sharp. Indeed, taking $v_n = Q$ one has, by \eqref{pohozaev} and \eqref{pohozaev2}, $m^{\frac{4-2b}{N}+2} = \left(\frac{N}{2-b}+1	\right) \|Q\|_{L^2}^2$ and $M^2 = \left(\frac{N}{2-b}\right)\|Q\|_{L^2}^2$. Therefore, $C(M,m)=1$, and \eqref{eq_117} is an equality.
\end{re}

\begin{re}
In the radial setting it is possible to give a simple proof of Theorem \ref{massconcentration} without relying in the compactness Theorem \ref{teo1}. See the details in Remark \ref{Rem1} below.
\end{re}
Using the pseudoconformal transformation applied to \emph{standing wave} $e^{it} Q$, and the three symmetries of \eqref{INLS} described previously, we obtain a three-parameter family $$\left(S_{T,\lambda,\gamma}\right)_{T\in \Real,\lambda>0,\gamma\in \Real}$$ of mass critical solutions for \eqref{INLS}, $\|u_0\|_{L^2}=\|Q\|_{L^2}$, blowing up in finite time, defined as
\begin{equation*}
S_{T,\lambda,\gamma}(x,t)=e^{i\gamma}e^{i\frac{\lambda^2}{T-t}}e^{-i\frac{|x|^2}{4(T-t)}}\left(\frac{\lambda_0}{T-t}\right)^{\frac{N}{2}}Q\left(\frac{\lambda x}{T-t}\right).
\end{equation*}

(see Combet-Genoud \citep[Proposition 10]{Cmb}).

In fact, these are the only finite-time blow up solutions with critical mass, as shown in the following theorem.
\begin{teo}\label{caract} Let $u$ be a solution of \eqref{INLS} in the $L^2$-critical case ($\sigma=\frac{2-b}{N}$) blowing up in finite time $T>0$, such that $\|u_0\|_{L^2}=\|Q\|_{L^2}$. Then, $e^{i(|x|^2/4T)}u_0\in \mathcal{A}$, where
\begin{equation*}
\mathcal{A}=\left\{\rho^{N/2}e^{i\theta}Q(\rho x),\,y\in \Real^N,\,\rho\in \Real^+_*,\,\theta\in [0,2\pi) \right\}.
\end{equation*}
\end{teo}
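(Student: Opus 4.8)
\emph{Step 1: the rescaled solution converges to a modulated ground state.} The plan is to follow Merle's classical scheme for the mass-critical NLS --- adapted to the inhomogeneous equation by Combet--Genoud \citep{Cmb} --- but carrying out the extraction of the blow-up profile through the compactness Theorem \ref{teo1}; throughout we use $N\sigma+b=2$. Fix $t_n\nearrow T$, set $\rho_n:=\|\nabla Q\|_{L^2}/\|\nabla u(t_n)\|_{L^2}$ (so $\rho_n\to0$ by \eqref{lowerbound}), and put $v_n(x):=\rho_n^{N/2}u(\rho_n x,t_n)$. Then $\|v_n\|_{L^2}=\|u(t_n)\|_{L^2}=\|Q\|_{L^2}$ and $\|\nabla v_n\|_{L^2}=\|\nabla Q\|_{L^2}$, and a change of variables gives $\int|x|^{-b}|v_n|^{2\sigma+2}\,dx=\rho_n^{2}\int|y|^{-b}|u(y,t_n)|^{2\sigma+2}\,dy$, which, by the energy conservation \eqref{energy}, tends to $(\sigma+1)\|\nabla Q\|_{L^2}^{2}=\int|x|^{-b}|Q|^{2\sigma+2}\,dx=:m^{2\sigma+2}$. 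With $M:=\|\nabla Q\|_{L^2}$, the Pohozaev identities \eqref{pohozaev}--\eqref{pohozaev2} show that $C(M,m)=1$ in \eqref{CMn}, so Theorem \ref{teo1} yields --- after passing to a subsequence, its translation parameters staying bounded since the weight $|x|^{-b}$ pins the concentration at the origin --- a function $V\in H^1$ with $v_n\rightharpoonup V$ and $\|V\|_{L^2}\ge\|Q\|_{L^2}$. Weak lower semicontinuity together with $\|v_n\|_{L^2}=\|Q\|_{L^2}$ forces $\|V\|_{L^2}=\|Q\|_{L^2}$ and $v_n\to V$ in $L^2$; interpolating against the $H^1$ bound yields strong convergence in $L^{2\sigma+2}$, so $\int|x|^{-b}|V|^{2\sigma+2}\,dx=m^{2\sigma+2}$. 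Inserting these facts into the sharp Gagliardo--Nirenberg inequality \eqref{gagliardo} forces equality, hence $V$ is an extremizer and $\|\nabla V\|_{L^2}=\|\nabla Q\|_{L^2}$ (so $v_n\to V$ strongly in $H^1$). Since an extremizer of \eqref{gagliardo} equals, after replacing it by its modulus and rescaling, a nonnegative solution of \eqref{ground_state}, hence $Q$ (translations being excluded by the inhomogeneity), we obtain $V=\mu\,e^{i\theta}\rho^{N/2}Q(\rho\cdot)$ for some $\mu>0$, $\theta$, $\rho>0$, and the two normalizations force $\mu=\rho=1$. Thus $\rho_n^{N/2}u(\rho_n\cdot,t_n)\to e^{i\theta}Q$ in $H^1$.

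\emph{Step 2: pseudoconformal transform and conclusion.} By \eqref{pohozaev}--\eqref{pohozaev2}, $E[Q]=0$ in the mass-critical case, hence $E[f]=\rho^{2}E[Q]=0$ for every $f=\rho^{N/2}e^{i\vartheta}Q(\rho\cdot)\in\mathcal{A}$; conversely, if $\|u_0\|_{L^2}=\|Q\|_{L^2}$ and $E[u_0]=0$, equality in \eqref{gagliardo} gives $u_0\in\mathcal{A}$ and $u$ is a global standing wave --- hence our blow-up solution has $E[u_0]>0$. Apply to $u$ the pseudoconformal transformation in the form attached to the blow-up time $T$ (as in \citep[Proposition 10]{Cmb}) to obtain a solution $w$ of \eqref{INLS} on a half-line with $\|w(s)\|_{L^2}=\|Q\|_{L^2}$. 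The key point is that $w$ extends across the endpoint $s_\star$ corresponding to $t=T$: $\|\nabla w(s)\|_{L^2}$ stays bounded and $w(s)\to W_0$ in $H^1$ as $s\to s_\star$. Granting this, since the pseudoconformal transform turns the concentrating, quadratically modulated ground state of Step 1 into a fixed-scale ground state, $W_0\in\mathcal{A}$; as $\|W_0\|_{L^2}=\|Q\|_{L^2}$ and $E[W_0]=0$, the solution issuing from $W_0$ is the global standing wave $e^{i\rho_0^{2}s+i\gamma_0}\rho_0^{N/2}Q(\rho_0\cdot)$. Finally, the pseudoconformal transformation being involutive up to conjugation, undoing it identifies $u$ with the corresponding member $S_{T,\lambda,\gamma}$ of the three-parameter family; evaluating at $t=0$ gives $e^{i|x|^2/4T}u_0=\rho^{N/2}e^{i\gamma'}Q(\rho x)\in\mathcal{A}$ with $\rho=\lambda/T$, as claimed.

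\emph{Main obstacle.} Everything hinges on extending $w$ across $s_\star$, and this is the delicate step: the convergence in Step 1 takes place at the self-similar scale $\rho_t\to0$ and does not detect the quadratic phase that governs the pseudoconformal transform, so it must be refined. The standard route is to show that $u(t)\in\Sigma$ for $t$ near $T$ --- via a truncated-variance argument exploiting the criticality of the mass --- so that the virial identity \eqref{virial} applies, in the mass-critical case degenerating to $\frac{d^2}{dt^2}\int|x|^2|u(t)|^2\,dx=16E[u_0]$; and then to prove that both $\int|x|^2|u(t)|^2\,dx$ and $\operatorname{Im}\int\bar u(t)\,x\cdot\nabla u(t)\,dx$ tend to $0$ as $t\nearrow T$, which pins the blow-up profile to be exactly the pseudoconformal one and makes $w$ bounded near $s_\star$. (Alternatively, following Hmidi--Keraani, one can avoid finite variance by applying the pseudoconformal transform to the sequence $\{u(t_n)\}$ and passing to a limit.) Everything past the extension is bookkeeping with the explicit formulas.
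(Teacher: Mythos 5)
Your Step 1 is essentially the paper's Proposition \ref{strongQ}, but the pivot of your whole argument --- the quantitative variance bound near the blow-up time --- is exactly the step you defer to ``the standard route,'' and that is where the actual work of the paper lies. Two concrete gaps. First, the boundedness of the translation parameters $x_n$ is not something the weight $|x|^{-b}$ gives you for free: the paper proves it by contradiction, assuming $|x_n|\to\infty$ and combining the Dirac-mass convergence $|u(t_n)|^2\,dx-\|Q\|_{L^2}^2\delta_{x=x_n}\rightharpoonup 0$ with the uniform estimate $\int|x|^2|u(x,t)|^2\,dx\leq C(u_0)(T-t)^2$, which itself is obtained from Banica's Cauchy--Schwarz-type inequality (Lemma \ref{csinequality}) applied to truncated variances $g_p(t)=\int\Phi_p|u|^2$, giving $|g_p'(t)|\leq C(u_0)\sqrt{g_p(t)}$ and hence $g_p(t)\leq C(u_0)(T-t)^2$ uniformly in $p$ after using $g_p(t_n)\to 0$. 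Without this (or an equivalent) argument your extraction of the profile at the origin, and everything downstream, is unsupported. Second, your ``Main obstacle'' paragraph concedes that the extension of the pseudoconformally transformed solution $w$ across $s_\star$ is not proved; since the entire identification of $u_0$ rests on it, the proposal as written does not establish the theorem.

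It is also worth noting that once the variance bound \eqref{cotagp} is in hand, the paper's endgame is considerably more direct than the Merle-style route you sketch: it never transforms the solution or extends it past an endpoint. Instead, it plugs the virial identity \eqref{virial} into the explicit expansion of $E\bigl[e^{i|x|^2/4t}u_0\bigr]$ and finds that all terms cancel except $\frac{1}{8t^2}\int|x|^2|u(x,t)|^2\,dx\leq \frac{C(u_0)}{8t^2}(T-t)^2$, which tends to $0$ as $t\nearrow T$. Hence $E\bigl[e^{i|x|^2/4T}u_0\bigr]=0$ with $\bigl\|e^{i|x|^2/4T}u_0\bigr\|_{L^2}=\|Q\|_{L^2}$, and the variational characterization (Proposition \ref{VarCar}) concludes immediately. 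If you supply the truncated-variance argument, I would recommend replacing your Step 2 by this computation: it avoids both the well-posedness issue at $s_\star$ and the bookkeeping with the family $S_{T,\lambda,\gamma}$.
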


This theorem was first obtained by Combet-Genoud \citep[Theorem 1]{Cmb}. Here we give an alternative proof based in the ideas introduced by Hmidi-Keraani \citep{keraani} in the NLS setting.

Next, we consider the intercritical INLS equation. In \citep{holmer2007blow}, Holmer and Roudenko investigated the asymptotic behavior of the $L^{3}(\mathbb{R}^N)$ norm for radial, finite-time blowup solutions to the NLS equation in the 3D cubic case. We generalize this result for the INLS equation with $0 < s_c < 1$. We  show the concentration of the $L^{\sigma_c}(\mathbb{R}^N)$ norm, where
\begin{equation*}
        \sigma_c = \frac{2N\sigma}{2-b}
\end{equation*}

is such that $\dot{H}^{s_c}(\mathbb{R}^N) \hookrightarrow L^{\sigma_c}(\mathbb{R}^N)$. Note that, in the $L^2$-critical case, $\dot{H}^{s_c}(\mathbb{R}^N) = L^{\sigma_c} (\mathbb{R}^N)= L^2(\mathbb{R}^N)$.
\begin{teo}\label{Tlpc}
Let $N \geq 2$ and $\frac{2-b}{N} < \sigma < \min\{\frac{2-b}{N-2},2\}$. Suppose that $u$ is a radial $H^1(\Real^N)$-solution of \eqref{INLS} that blows up in finite time $T>0$. Then either there exist $c_0\gg 1 $ (depending on the solution, but not on time) such that for $t\to T$
\begin{equation}\label{fint}
\int_{|x|\leq c_0^2\|\nabla u(t)\|_{L^2}^{-\frac{1}{1-s_c}}}|u(x,t)|^{\sigma_c}\,dx\geq c_0^{-1},
\end{equation}
or there exist a sequence  of times $t_n\to T$ such that, for some constant $\widetilde{c}_0$ (independent on the solution and on time)
\begin{equation}\label{inft}
 \int_{|x|\leq \widetilde{c}_0\|u_0\|_{L^2}^{\frac{2(\sigma+2)}{2(\sigma(N-1)+b)}}\|\nabla u(t_n)\|_{L^2}^{-\frac{2(2-\sigma)}{2(\sigma(N-1)+b)}}}|u(x,t_n)|^{\sigma_c}\,dx\to +\infty.
\end{equation}
\end{teo}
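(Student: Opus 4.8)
The plan is to work at the self-similar length scale $\rho(t)=\|\nabla u(t)\|_{L^2}^{-1/(1-s_c)}$, trap the potential energy in a ball using the conservation laws and the radial decay of $H^1$ functions, and then split according to whether a fixed portion of the $L^{\sigma_c}$-mass lives in a bounded ball at that scale. Throughout, $s_c=\frac N2-\frac{2-b}{2\sigma}\in(0,1)$ and $\sigma_c=\frac{2N\sigma}{2-b}$; the hypothesis $\sigma>\frac{2-b}{N}$ gives $\sigma_c>2$, the hypothesis $\sigma<2$ makes $2-\sigma>0$ (so the radius in \eqref{inft} tends to $0$ as $t\nearrow T$), and $\sigma<\frac{2-b}{N-2}$ (vacuous for $N=2$) is precisely $\sigma_c\le\frac{2N}{N-2}$, which is what allows interpolation between $L^{\sigma_c}$ and $\dot H^1$ on balls.

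First I would localize the potential energy at scale $R(t)$. By \eqref{energy}, $\int|x|^{-b}|u(t)|^{2\sigma+2}\,dx=(\sigma+1)\|\nabla u(t)\|_{L^2}^2-(2\sigma+2)E[u_0]$, and by \eqref{lowerbound} this is at least $c\|\nabla u(t)\|_{L^2}^2$ for $t$ near $T$. For radial $f\in H^1$ one has the scale-invariant Strauss inequality $|f(x)|\le C|x|^{-(N-1)/2}\|f\|_{L^2}^{1/2}\|\nabla f\|_{L^2}^{1/2}$, so mass conservation gives, for any $R>0$,
\[
\int_{|x|>R}|x|^{-b}|u(t)|^{2\sigma+2}\,dx\le\Big(\sup_{|x|>R}|x|^{-b}|u(t,x)|^{2\sigma}\Big)\|u_0\|_{L^2}^{2}\le C\,R^{-(b+\sigma(N-1))}\|u_0\|_{L^2}^{\sigma+2}\|\nabla u(t)\|_{L^2}^{\sigma}.
\]
Choosing $R(t)=\widetilde c_0\|u_0\|_{L^2}^{\frac{\sigma+2}{b+\sigma(N-1)}}\|\nabla u(t)\|_{L^2}^{-\frac{2-\sigma}{b+\sigma(N-1)}}$ — exactly the radius in \eqref{inft} — with $\widetilde c_0$ large bounds this tail by $\tfrac12 c\|\nabla u(t)\|_{L^2}^2$, so that $\int_{|x|\le R(t)}|x|^{-b}|u(t)|^{2\sigma+2}\,dx\ge\tfrac c2\|\nabla u(t)\|_{L^2}^2$ for $t$ near $T$.

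Next I would rescale: let $v_t(y)=\rho(t)^{\frac{2-b}{2\sigma}}u(\rho(t)y,t)$, a radial function with $\|\nabla v_t\|_{L^2}=1$. A change of variables gives, for every $c_0\ge1$, the identities $\int_{|x|\le c_0^2\rho(t)}|x|^{-b}|u(t)|^{2\sigma+2}\,dx=\|\nabla u(t)\|_{L^2}^{2}\,I_{c_0}(t)$ with $I_{c_0}(t):=\int_{|y|\le c_0^2}|y|^{-b}|v_t|^{2\sigma+2}\,dy$, and $\int_{|x|\le c_0^2\rho(t)}|u(t)|^{\sigma_c}\,dx=\int_{|y|\le c_0^2}|v_t|^{\sigma_c}\,dy$ (the $L^{\sigma_c}$-norm having the same scaling as $\dot H^{s_c}$). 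Now the dichotomy. If there is some $c_0\gg1$ with $I_{c_0}(t)\ge\tfrac c4$ for all $t$ near $T$, a localized weighted Gagliardo–Nirenberg inequality on the fixed ball $B_{c_0^2}$, of the form $\int_{B_r}|x|^{-b}|f|^{2\sigma+2}\,dx\le C_r\|\nabla f\|_{L^2}^{a}\|f\|_{L^{\sigma_c}(B_r)}^{2\sigma+2-a}$ for $a$ slightly above $2$ (where the weight is just barely integrable to the right power), bounds $I_{c_0}(t)$ by a fixed power of $\int_{|y|\le c_0^2}|v_t|^{\sigma_c}\,dy$; undoing the scaling and enlarging $c_0$ (which only grows the ball and the integral) yields \eqref{fint}. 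If instead \eqref{fint} fails for every $c_0$, then for each $k$ there is a sequence $s_j\nearrow T$ with $\int_{|x|\le k^2\rho(s_j)}|u(s_j)|^{\sigma_c}\,dx<k^{-1}$, and the same localized inequality on $B_{k^2}$ forces $I_k(s_j)<\tfrac c4$ once $k$ is large. Splitting $B_{R(t)}=B_{c_0^2\rho(t)}\cup\{c_0^2\rho(t)<|x|\le R(t)\}$ (legitimate since $R(t)/\rho(t)\to\infty$), with $c_0=k$ and $t$ in that sequence the core contributes less than $\tfrac c4\|\nabla u(t)\|_{L^2}^2$, so the annulus must carry at least $\tfrac c4\|\nabla u(t)\|_{L^2}^2$ of the weighted $L^{2\sigma+2}$-mass; there the weight is no longer singular, and a Hölder/Sobolev estimate on the annulus — whose exponents are calibrated by the very choice of $R(t)$ so that the total power of $\|\nabla u(t)\|_{L^2}$ is exactly $2$ — gives $\|u(t)\|_{L^{\sigma_c}(B_{R(t)})}^{\delta}\ge c'\|u_0\|_{L^2}^{-\theta_0}k^{\kappa}$ for fixed $\delta,\kappa>0$. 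A diagonal extraction then produces a single sequence $t_k\nearrow T$ along which $\int_{|x|\le R(t_k)}|u(t_k)|^{\sigma_c}\,dx\to+\infty$, which is \eqref{inft}.

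The hard part is the annulus estimate in the last step. The weight $|x|^{-b}$ sits exactly at the borderline exponent $L^{N/b}$, so the scale-invariant bound $\int|x|^{-b}|f|^{2\sigma+2}\,dx\le C\|\nabla f\|_{L^2}^{2}\|f\|_{L^{\sigma_c}}^{2\sigma}$ — which would give clean concentration at the single scale $\rho(t)$ — fails by a logarithm; this is precisely why the core must be excised, and why, when the blow-up is self-similarly localized, only the weaker alternative \eqref{fint} survives. One then has to verify, treating separately the cases $\sigma_c\ge2\sigma+2$ (use Hölder to split off the weight on the annulus, where its $L^{p_1}$-norm with $p_1=\frac{\sigma_c}{\sigma_c-2\sigma-2}$ is concentrated near the inner radius but still scales correctly) and $\sigma_c<2\sigma+2$ (bound the weight by its maximum on the annulus and interpolate $\int_{B_{R(t)}}|u|^{2\sigma+2}$ between $L^{\sigma_c}$ and $\dot H^1$), that the choice of $R(t)$ forces all powers of $\|\nabla u(t)\|_{L^2}$ to cancel down to exactly $2$, so that there is no uncontrolled competition with the a priori unknown blow-up rate $\|\nabla u(t_k)\|_{L^2}\to+\infty$. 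The identity $N\sigma-(2-b)=2\sigma s_c$ (equivalently, the Pohozaev relations) is what makes these cancellations work, and it is also what makes the radius and the $\|u_0\|_{L^2}$-power in \eqref{inft} match the rescaling.
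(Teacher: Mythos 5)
Your argument reaches the statement's dichotomy by a genuinely different route from the paper's. The paper (Proposition \ref{Plpc}) splits $u=u_{1L}+u_{1H}+u_2$ using a spatial cutoff at $R(t)$ \emph{and} a frequency cutoff at $\rho(t)=c_2\|\nabla u(t)\|_{L^2}^{1/(1-s_c)}$: the exterior piece is absorbed by the radial Strauss bound (as in your first step), the high-frequency piece is absorbed by the critical Gagliardo--Nirenberg inequality \eqref{GNcrit} together with $\|u_{1H}\|_{\dot{H}^{s_c}}\lesssim\rho^{-(1-s_c)}\|\nabla u_1\|_{L^2}$, and the spatial localization in \eqref{fint} is then extracted by writing $u_{1L}$ as a convolution at scale $\rho^{-1}$ and using radiality (disjoint balls on an annulus) to pin the concentration point near the origin. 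You dispense with the frequency decomposition entirely and run a purely spatial dichotomy at radius $c_0^2\|\nabla u(t)\|_{L^2}^{-1/(1-s_c)}$, compensating with an annulus estimate when the core carries too little potential energy. I checked the two cancellations your annulus step hinges on: with $\theta$ the interpolation exponent between $L^{\sigma_c}$ and $L^{2N/(N-2)}$ one indeed has $2-\theta(2\sigma+2)=b/(1-s_c)$, and in the H\"older sub-case the hypothesis $\sigma<\frac{2-b}{N-2}$ forces $bp_1>N$, so the $L^{p_1}$-norm of the weight on the annulus is governed by the inner radius $c_0^2\rho(t)$ and the powers of $\|\nabla u(t)\|_{L^2}$ again collapse to exactly $2$, leaving a positive power of $c_0$. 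So the skeleton is sound; what is left to the reader is the $N=2$ endpoint (where $2^*=\infty$ and the Sobolev endpoint must be replaced) and the competition in your localized inequality between the ball-dependent constant $C_{k}$ and the gain $k^{-(2\sigma+2-a)/\sigma_c}$, which does resolve favorably but only because $\sigma$ is strictly energy-subcritical.

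One assertion should be deleted: the scale-invariant bound $\int|x|^{-b}|f|^{2\sigma+2}\,dx\le C\|\nabla f\|_{L^2}^2\|f\|_{L^{\sigma_c}}^{2\sigma}$ does \emph{not} fail by a logarithm for $N\ge3$ --- it is precisely Lemma \ref{GNcrit2}. The point is that one must not put the whole weight into a single H\"older factor at the borderline exponent $N/b$; splitting $|x|^{-b}|f|^{2\sigma+2}=\left(|x|^{-b/2}|f|\right)^2|f|^{2\sigma}$ and applying the Stein--Weiss (Hardy--Sobolev) inequality $\||x|^{-b/2}f\|_{L^{2N/(N-2+b)}}\le C\|\nabla f\|_{L^2}$, which is non-borderline since $b/2<\frac{N-2+b}{2}$ whenever $N>2$, yields the sharp $a=2$ estimate globally and with a constant independent of the ball. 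Using it would let you remove the ``$a$ slightly above $2$'' device and all of the $C_k$ bookkeeping in both halves of your dichotomy. (Your caution has substance only at $N=2$, where the Stein--Weiss condition $\beta<N/q$ becomes an equality --- but that borderline issue equally affects the paper's Lemma \ref{GNcrit2} as stated for $N=2$.)
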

\begin{re}
We can prove the above Theorem for the intercritical NLS and also generalizing the result of Holmer-Roudenko \citep{holmer2007blow} in the NLS setting. This will appear in a future work.
\end{re}

The two cases in the statement of Theorem \ref{Tlpc} are not mutually exclusive. The restriction on the dimension comes from using the decay of radial $H^1$ functions. From the lower bound \eqref{lowerbound}, the concentration window in \eqref{fint} satisfies 
\begin{equation*}
\| \nabla u(t)\|_{L^2}^{-\frac{1}{1-s_c}} \leq c (T-t)^\frac{1}{2},
\end{equation*}

and the concentration window \eqref{inft} satisfies 
\begin{equation*}
\|\nabla u(t)\|_{L^2}^{-\frac{2(2-\sigma)}{2(\sigma(N-1)+b)}} \leq c (T-t)^{\frac{(2-\sigma)(1-s_c)}{2(\sigma(N-1)+b)}}.
\end{equation*}

This paper is structured as follows. In sections \ref{s_Mass_Concentration} and \ref{s_Minimal_Mass_Solutions}, we prove Theorem \ref{massconcentration} and Theorem \ref{caract}, respectively, both assuming Theorem \ref{teo1}. Section \ref{s_Critical_Norm_Concentration} is devoted to the proof of Theorem \ref{Tlpc}. In the Appendix, we give a proof for Theorem \ref{teo1}.

\textbf{Acknowledgments}
The authors thank
Svetlana Roudenko (FIU) and Luiz Gustavo Farah (UFMG) for their valuable for valuable comments and suggestions which helped improve the manuscript.
Part of this work was done when the first author was visiting Florida International University in 2018-19 under the support of Coordenação de Aperfeiçoamento de Pessoal de Nível Superior - Brasil (CAPES), for which all authors are very grateful as it boosted the energy into
the research project.
L. C. was financed in part by the Coordenação de Aperfeiçoamento de Pessoal de Nível Superior - Brasil (CAPES) - Finance Code 001.



\section{Mass Concentration}\label{s_Mass_Concentration}

In this section, we prove Theorem \ref{massconcentration}, assuming Theorem \ref{teo1}, which is proved in the  Appendix.
\begin{proof}[Proof of Theorem \ref{massconcentration}]
First, we define
\begin{equation*}
\rho(t) = \frac{\|\nabla Q\|_{L^2}}{\|\nabla u(t)\|_{L^2}}
\end{equation*}
and since $\sigma=\frac{2-b}{N}$
\begin{equation}\label{defv}
v(x,t) = \rho(t)^\frac{2-b}{2\sigma} u(\rho(t)x,t) = \rho(t)^\frac{N}{2} u(\rho(t)x,t).
\end{equation}

Let $\left\{t_n\right\}$ be an arbitrary sequence of times such that $t_n \nearrow T$ and define $\rho_n := \rho(t_n)$ and $v_n := v(\cdot, t_n)$. Note that $\rho_n\rightarrow 0$, as $n\rightarrow \infty$ by \eqref{lowerbound}. In view of the mass conservation \eqref{mass} and the definition of $\rho(t)$, the sequence $\left\{v_n\right\}$ satisfies
\begin{equation}\label{defv2}
        \|v_n\|_{L^2} = \|u_0\|_{L^2}, \,\,\, \|\nabla v_n \|_{L^2} = \|\nabla Q\|_{L^2}.
\end{equation}

Also, for the energy
\begin{equation*}
        E[v_n] = \rho_n^2 \left(\frac{1}{2}\|\nabla u(t_{_n})\|_{L^2}^2 -  \frac{N}{2(2-b + N)}\left\||\cdot|^\frac{-Nb}{2(2-b+N)}u(t_n)\right\|_{L^{\frac{4-2b}{N}+2}}^{\frac{4-2b}{N}+2}\right) = \rho_n^2 E[u_0].
\end{equation*}

Hence, $E[v_n] \rightarrow 0$ when $n\to \infty$ and moreover
\begin{equation}\label{connectionQv}
         \left\||\cdot|^\frac{-Nb}{2(2-b+N)}v_n\right\|_{L^{\frac{4-2b}{N}+2}}^{\frac{4-2b}{N}+2} \rightarrow \left(\frac{2-b+N}{N}\right)\|\nabla Q\|^2_{L^2}\text{ as }n\to \infty.
\end{equation}

Thus, recalling that $\sigma = \frac{2-b}{N}$, the sequence $\left\{v_n\right\}$ satisfies the hypotheses of Theorem \ref{teo1} with
\begin{equation*}
        M = \|\nabla Q\|_{L^2}, \,\,\, m^{2\sigma+2}= m^{\frac{4-2b}{N}+2} = \left(\frac{2-b+N}{N}\right)\|\nabla Q \|^2_{L^2} , \,\,\, C(M,m) = 1.
\end{equation*}

Therefore, there exists a sequence $\left\{x_n\right\} \subset \mathbb{R}^N$ and $V \in H^1(\mathbb{R}^N)$, with $\|V\|_{L^2} \geq \|Q\|_{L^2}$ such that, up to a subsequence,
\begin{equation}\label{weakV}
        \rho_n^\frac{N}{2} u(\rho_n\cdot + x_n, t_n) \rightharpoonup V \,\,\, \text{in }H^1.
\end{equation}

Using (\ref{weakV}), it follows, for every $A > 0$, that
\begin{equation*}
        \liminf_{n \rightarrow +\infty} \int_{|x|\leq A} \rho_n^N |u(\rho_n x +x_n,t_n)|^2 dx \geq \int_{|x| \leq A} |V|^2 dx.
\end{equation*}

Changing variables and using the hypothesis $\lambda(t_n)/\rho_n \rightarrow +\infty$, we obtain
\begin{equation*}
        \liminf_{n \rightarrow +\infty} \sup_{y \in \mathbb{R}^N} \int_{|x-y|\leq \lambda(t_n)} |u(x,t_n)|^2 dx \geq \int_{|x| \leq A} |V|^2 dx,
\end{equation*}

for every $A>0$, which gives
\begin{equation*}
        \liminf_{n \rightarrow +\infty} \sup_{y \in \mathbb{R}^N} \int_{|x-y|\leq \lambda(t_n)} |u(x,t_n)|^2 dx \geq \int_{\mathbb{R}^N} |V|^2 dx \geq C(M,m)\|Q\|_{L^2}.
\end{equation*}

Since $\left\{t_n\right\}$ is arbitrary, we get
\begin{equation*}
        \liminf_{t\nearrow T} \sup_{y \in \mathbb{R}^N} \int_{|x-y|\leq \lambda(t)} |u(x,t)|^2 dx \geq \int Q^2 dx.
\end{equation*}

Recalling that, for every $t \in [0, T)$ the function $y \mapsto \int_{|x-y|\leq \lambda(t)} |u(x,t)|^2 dx$ is continuous and goes to $0$ at infinity, we deduce the existence of $x(t)$ such that
\begin{equation}\label{contfun}
        \sup_{y \in \mathbb{R}^N} \int_{|x-y|\leq \lambda(t_n)} |u(x,t)|^2 dx = \int_{|x-x(t)|\leq \lambda(t_n)} |u(x,t)|^2 dx,
\end{equation}

which completes the proof of Theorem \ref{massconcentration}.
\end{proof}
\begin{re}\label{Rem1} In  Theorem \ref{massconcentration}, with the arguments in Weinstein \citep{W_1989}, if we suppose that $u$ is a radial solution, then we can give a more direct proof using a compact embedding. Indeed, let $\{t_n\}$ be such that $t_n\to T$ as $n\to\infty$, and set
$$v_n=\rho(t_n)^{\frac{N}{2}}u(t_n,\rho(t_n)x),$$ as done in the proof of Theorem \ref{massconcentration}.
Then,
$$E[v_n]=\rho(t_n)^{2}E[u_0]\to 0,\mbox{ as }n\to \infty,\,\,\,\|v_n\|_{L^2}=\|u_0\|_{L^2}\,\,\,\,\mbox{ and }\|\nabla v_n\|_{L^2}=\|\nabla Q\|_{L^2}.$$
Thus, by the compact embedding $H^1_{rad}(\Real^N)\hookrightarrow L^p(\Real^N)$, valid for $2<p<\frac{2N}{N-2}$, there exists $V\in L^p(\mathbb{R}^N)$ such that $v_n\to V$, up to a subsequence. Since $0<b<2$, then from the Holder's inequality, for all
$$\gamma\in\left(\frac{N}{N-b},\frac{2N^2}{(N-2)(4-2b+2N)}\right)$$
and $\gamma'$  such that $\displaystyle\frac{1}{\gamma'}+\frac{1}{\gamma}=1$, we have
$$\int |x|^{-b}|v_n-V|^{\frac{4-2b}{N}+2}\,dx\leq \left(\int |x|^{-b\gamma'}\,dx\right)^{\frac{1}{\gamma'}}\left(\int|v_n-V|^{\left(\frac{4-2b}{N}+2\right) \gamma}dx\right)^{\frac{1}{\gamma}}\to 0$$
as $n\to +\infty$.Therefore,
$$\int |x|^{-b}|V|^{\frac{4-2b}{N}+2}dx=\frac{2-b+N}{N}\|\nabla Q\|_{L^2}\neq 0.$$
By the sharp version of the Gagliardo-Nirenberg inequality, we have
$$0=\lim_{n\to \infty}E[v_n]=E[V]\geq \|\nabla V\|_{L^2}^2\left[1-\left(\frac{\|V\|_{L^2}}{\|Q\|_{L^2}}\right)^{\frac{4-2b}{N}}\right].$$
Hence, $\|V\|_{L^2}\geq \|Q\|_{L^2}$. The rest of the proof follows the same lines as in the proof of Theorem \ref{massconcentration}.
\end{re}

\section{Minimal mass solutions}\label{s_Minimal_Mass_Solutions}
In this section we give an alternative proof of Theorem \ref{caract}. We start with the following variational characterization of the ground state proved by Combet-Genoud \citep[Proposition 2]{Cmb}.
\begin{prop}\label{VarCar} Let $v \in H^1$ be such that
$$
\|v\|_{L^2} = \|Q\|_{L^2} \text{ and } E[v] = 0.
$$
Then there exist $\lambda_0 >0 $ and $\gamma_0 \in \Real$ such that $v(x) = e^{i \gamma_0}\lambda_0^{\frac{N}{2}}\psi(\lambda_0 x)$.
\end{prop}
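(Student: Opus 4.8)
The plan is to exploit the sharp Gagliardo--Nirenberg inequality \eqref{gagliardo} together with its characterization of equality. Given $v \in H^1$ with $\|v\|_{L^2} = \|Q\|_{L^2}$ and $E[v] = 0$, the energy identity \eqref{energy} reads
$$\frac12 \|\nabla v\|_{L^2}^2 = \frac{1}{2\sigma+2} \int |x|^{-b} |v|^{2\sigma+2}\,dx.$$
In the $L^2$-critical case $\sigma = \frac{2-b}{N}$ we have $N\sigma + b = 2$, so \eqref{gagliardo} becomes
$$\int |x|^{-b}|v|^{2\sigma+2}\,dx \leq K_{opt}\, \|\nabla v\|_{L^2}^2 \, \|v\|_{L^2}^{2\sigma},$$
with $K_{opt} = \frac{2\sigma+2}{2\,\|Q\|_{L^2}^{2\sigma}}$ after substituting $N\sigma + b = 2$ into \eqref{gagliardo_opt}. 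Plugging this into the energy identity and using $\|v\|_{L^2} = \|Q\|_{L^2}$, I would obtain
$$\frac12 \|\nabla v\|_{L^2}^2 = \frac{1}{2\sigma+2}\int |x|^{-b}|v|^{2\sigma+2}\,dx \leq \frac{1}{2\sigma+2} \cdot \frac{2\sigma+2}{2\,\|Q\|_{L^2}^{2\sigma}} \|\nabla v\|_{L^2}^2 \|Q\|_{L^2}^{2\sigma} = \frac12 \|\nabla v\|_{L^2}^2.$$
Hence equality holds throughout, and in particular $v$ is an optimizer of the Gagliardo--Nirenberg inequality \eqref{gagliardo} (unless $\nabla v \equiv 0$, which is impossible since then $E[v]=0$ forces $v\equiv 0$, contradicting $\|v\|_{L^2}=\|Q\|_{L^2}\neq 0$).

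Next I would invoke the characterization of equality in the sharp inequality: the optimizers of \eqref{gagliardo} are exactly the functions obtained from $Q$ by the scaling, phase, and dilation symmetries that leave the quotient invariant, i.e. $v(x) = e^{i\gamma_0} \mu^{\frac{2-b}{2\sigma}} Q(\lambda_0 x)$ for suitable constants. This is standard since $Q$ is (up to those symmetries) the unique minimizer of the Weinstein-type functional whose Euler--Lagrange equation is \eqref{ground_state}; one can either cite Farah \citep{Farah_well}, whose derivation of $K_{opt}$ already identifies the extremizers, or re-derive it by noting that any optimizer solves, after normalization, the elliptic equation \eqref{ground_state} and then apply the uniqueness of the ground state. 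The mass constraint $\|v\|_{L^2} = \|Q\|_{L^2}$ then pins down the relation between $\mu$ and $\lambda_0$; in the $L^2$-critical scaling $\mu^{\frac{2-b}{2\sigma}} = \mu^{N/2}$, so $\|v\|_{L^2} = \mu^{N/2}\lambda_0^{-N/2}\|Q\|_{L^2}$, forcing $\mu = \lambda_0$ and yielding $v(x) = e^{i\gamma_0}\lambda_0^{N/2} Q(\lambda_0 x)$, which is the claimed form (here $\psi = Q$).

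The main obstacle is not the computation but making rigorous the passage from "equality in Gagliardo--Nirenberg" to "$v$ is a ground state," in particular handling the possibility that $v$ is complex-valued and that $|v|$ rather than $v$ is the real optimizer. I would address this by observing that $|\nabla |v|\,| \leq |\nabla v|$ pointwise (the diamagnetic inequality), while $\int |x|^{-b}|v|^{2\sigma+2} = \int |x|^{-b}||v||^{2\sigma+2}$; hence $|v|$ is also an optimizer and $\||\nabla |v|\,|\|_{L^2} = \|\nabla v\|_{L^2}$, which forces $v = e^{i\gamma_0}|v|$ for a constant phase $\gamma_0$ (the real and imaginary parts must be proportional with constant ratio). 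Then $|v|$, being a nonnegative optimizer, coincides after a dilation with the unique positive radial solution $Q$ of \eqref{ground_state}, and one reads off $\lambda_0$ from the mass normalization. A secondary point worth a line is confirming $v \not\equiv 0$, handled above, and noting that the statement's $\psi$ is just an alternative name for $Q$ (the notation in Proposition~\ref{VarCar} as transcribed writes $\psi$ where $Q$ is meant).
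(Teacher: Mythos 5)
The paper does not actually prove Proposition \ref{VarCar}; it cites Combet--Genoud \citep[Proposition 2]{Cmb}, and your argument is essentially the proof given there: $E[v]=0$ together with $\|v\|_{L^2}=\|Q\|_{L^2}$ forces equality in the sharp Gagliardo--Nirenberg inequality \eqref{gagliardo} (your computation that $K_{opt}=\tfrac{2\sigma+2}{2\|Q\|_{L^2}^{2\sigma}}$ when $N\sigma+b=2$ is correct), and the characterization of its extremizers --- via equality in the diamagnetic inequality to extract a constant phase, rearrangement/uniqueness of the positive ground state of \eqref{ground_state}, and the mass normalization pinning $\mu=\lambda_0$ --- yields the stated form (with $\psi=Q$). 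The proposal is correct and takes the same route as the cited source, so no further comparison is needed.
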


Next, we prove the following proposition, which is a direct consequence of Theorem \ref{teo1} and Proposition \ref{VarCar}.
\begin{prop}\label{strongQ}
Let $u$ be a solution of \eqref{INLS} in the $L^2$-critical case ($\sigma=\frac{2-b}{N}$) blowing up in finite time $T>0$ and $\|u_0\|_{L^2}=\|Q\|_{L^2}$. Then, there exist functions $\rho(t)>0, \theta(t)\in \Real$ and $x(t)\in \Real^{N}$ such that
\begin{equation*}
\rho(t)^{N/2}\, e^{i\theta(t)}u(t,\rho(t)x)\to Q \mbox{ as }t\nearrow T,
\end{equation*}
strongly in $H^1(\mathbb{R}^N)$.
\end{prop}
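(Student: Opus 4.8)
The plan is to start from an arbitrary time sequence $t_n \nearrow T$ and run the same rescaling as in the proof of Theorem \ref{massconcentration}: set $\rho(t) = \|\nabla Q\|_{L^2}/\|\nabla u(t)\|_{L^2}$ and $v_n = v(\cdot, t_n)$ with $v(x,t) = \rho(t)^{N/2} u(\rho(t)x, t)$. As computed there, $\|v_n\|_{L^2} = \|u_0\|_{L^2} = \|Q\|_{L^2}$, $\|\nabla v_n\|_{L^2} = \|\nabla Q\|_{L^2}$, and $E[v_n] = \rho_n^2 E[u_0] \to 0$. Since $\|v_n\|_{L^2} = \|Q\|_{L^2}$ exactly, the constant $C(M,m)$ attached to this family equals $1$ (this is precisely the sharpness remark after Theorem \ref{teo1}), so Theorem \ref{teo1} produces $\{x_n\} \subset \Real^N$ and $V \in H^1$ with $v_n(\cdot + x_n) \rightharpoonup V$ weakly in $H^1$ and $\|V\|_{L^2} \geq \|Q\|_{L^2}$.

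Next I would upgrade weak convergence to strong convergence in $L^2$. By weak lower semicontinuity, $\|V\|_{L^2} \leq \liminf \|v_n(\cdot+x_n)\|_{L^2} = \|Q\|_{L^2}$, so combined with the reverse inequality above we get $\|V\|_{L^2} = \|Q\|_{L^2}$. A Hilbert space with weak convergence plus convergence of norms gives strong convergence, so $v_n(\cdot + x_n) \to V$ strongly in $L^2$. To control the nonlinear term I would argue as in Remark \ref{Rem1}: split $\int |x|^{-b}|v_n(\cdot+x_n) - V|^{2\sigma+2}\,dx$ via Hölder into a local part near the origin (where $|x|^{-b\gamma'}$ is integrable for a suitable $\gamma'$, using $0<b<2$) and a part away from the origin where $|x|^{-b}$ is bounded; since $v_n(\cdot+x_n) \to V$ strongly in $L^2$ and the family is bounded in $H^1$, interpolation gives strong convergence in every $L^p$ with $2 \leq p < 2N/(N-2)$, and the nonlinear term passes to the limit. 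Hence $E[V] \leq \liminf E[v_n(\cdot+x_n)] = 0$; but the sharp Gagliardo--Nirenberg inequality \eqref{gagliardo}--\eqref{gagliardo_opt} together with $\|V\|_{L^2} = \|Q\|_{L^2}$ forces $E[V] \geq 0$, so $E[V] = 0$.

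Now Proposition \ref{VarCar} (the Combet--Genoud variational characterization) applies to $V$: there exist $\lambda_0 > 0$ and $\gamma_0 \in \Real$ with $V(x) = e^{i\gamma_0}\lambda_0^{N/2} Q(\lambda_0 x)$ (reading $\psi = Q$). Feeding this back: we have $\|\nabla V\|_{L^2}^2 \leq \liminf \|\nabla v_n(\cdot+x_n)\|_{L^2}^2 = \|\nabla Q\|_{L^2}^2$, and from $E[V]=0$ with $\|V\|_{L^2}=\|Q\|_{L^2}$ and the Pohozaev identities one checks $\|\nabla V\|_{L^2} = \lambda_0 \|\nabla Q\|_{L^2}$, forcing $\lambda_0 \leq 1$; the analogous lower bound (or directly the fact that $V$ is a rescaled ground state with the same mass and $E=0$ pins down all the norms) gives $\lambda_0 = 1$, so in fact $\|\nabla v_n(\cdot+x_n)\|_{L^2} \to \|\nabla V\|_{L^2}$ and we again get strong $H^1$ convergence of $v_n(\cdot+x_n)$ to $V = e^{i\gamma_0} Q(\cdot)$. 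Undoing the shift and phase, and absorbing $x_n$ into the spatial argument via a further rescaling (writing the conclusion with translation parameter built into $x(t)$, or invoking radial symmetry of $Q$ to remove it), we obtain along the subsequence $\rho_n^{N/2} e^{i\theta_n} u(t_n, \rho_n \cdot) \to Q$ in $H^1$. Finally, a standard argument shows the full limit as $t \nearrow T$ exists: if it failed there would be a sequence violating it, but the above applies to that sequence too, producing a contradiction once one checks the limiting object is unique up to the symmetry parameters — so one defines $\rho(t), \theta(t), x(t)$ as near-optimizers of $\|\rho^{N/2}e^{i\theta}u(t,\rho\cdot+x) - Q\|_{H^1}$ and the compactness forces this quantity to $0$.

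The main obstacle I expect is the passage from the single-sequence compactness statement of Theorem \ref{teo1} to a statement valid for the continuous limit $t \nearrow T$, together with the bookkeeping needed to show the parameters $\rho(t), \theta(t), x(t)$ can be chosen consistently (and that the translation $x_n$ does not escape to infinity in a way that breaks the conclusion). The analytic core — strong $L^2$ convergence from weak convergence plus norm convergence, and the Hölder splitting of the weighted nonlinear term — is routine given Remark \ref{Rem1}; the delicate part is the rigidity/uniqueness argument that turns ``every sequence has a good subsequence'' into ``the whole family converges.''
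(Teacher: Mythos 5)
Your outline follows the paper's strategy up to the point where it matters most, and there it has a genuine gap: the treatment of the translation parameters $x_n$. For the INLS the energy functional is \emph{not} translation invariant, so $E[v_n(\cdot+x_n)]\neq E[v_n]=\rho_n^2E[u_0]$; your step ``$E[V]\le\liminf E[v_n(\cdot+x_n)]=0$'' is therefore unjustified, and the suggestions to ``absorb $x_n$ into the spatial argument'' or to ``invoke radial symmetry of $Q$'' cannot work, since Proposition \ref{VarCar} characterizes zero-energy critical-mass functions as rescaled ground states \emph{centered at the origin}, and a translate of such a function does not have zero energy. Until you prove that $x_n$ stays bounded (so that, after extraction and a redefinition of $V$, the convergence holds with no translation), you cannot compute $E[V]$, cannot apply Proposition \ref{VarCar}, and cannot rule out that the limiting profile escapes to spatial infinity. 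This is exactly the step you defer to ``bookkeeping'' in your last paragraph, but it is the analytic core of the proposition, not bookkeeping.

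The paper handles it with a substantial argument: the Cauchy--Schwarz type inequality of Banica (Lemma \ref{csinequality}) applied to localized virial quantities $g_p(t)=\int\Phi_p|u|^2\,dx$ yields $|g_p'(t)|\le C(u_0)\sqrt{g_p(t)}$, hence $\int|x|^2|u(x,t)|^2\,dx\le C(u_0)(T-t)^2$; combined with the fact that all the mass concentrates at the translation point, this forces that point to stay bounded, and the same virial bound is reused later in the proof of Theorem \ref{caract}. A shorter route, available once you know $\|V\|_{L^2}=\|Q\|_{L^2}$ (so the remainder $v_n-V(\cdot-x_n)$ vanishes in $L^2$ and hence, by Gagliardo--Nirenberg, in the weighted $L^{2\sigma+2}$ norm), is to observe that if $|x_n|\to\infty$ then $\int|x|^{-b}|V(x-x_n)|^{2\sigma+2}\,dx\to0$, contradicting \eqref{connectionQv}; but some such argument must be made explicit. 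The remaining ingredients of your proposal (strong $L^2$ convergence from norm convergence, the H\"older splitting of the weighted nonlinearity as in Remark \ref{Rem1}, the upgrade to strong $H^1$ convergence, and the ``every sequence has a good subsequence'' reduction) match the paper and are fine.
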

\begin{proof}%
Using the same notation as the one in Theorem \ref{massconcentration}, we already have
$\|V\|_{L^2}\geq \|Q\|_{L^2}$. On the other hand, using the mass conservation \eqref{mass}, for $A>0$ we obtain
\begin{align*}
\|Q\|_{L^2}^2&=\|u_0\|_{L^2}^2=\|u(t)\|_{L^2}^2\geq \liminf_{t\to T}\int_{|x-x(t)|\leq A\rho(t)} |u(x,t)|^2\,dx\\&\geq\liminf_{{t\to T}}\int_{|x|\leq A}  \rho(t)^{N}|u(\rho(t)x+x(t),t)|^2\,dx\geq \int_{|x|\leq A}|V|^2\,dx.
\end{align*}
Since $A>0$ is arbitrary, we have $\|Q\|_{L^2}\geq \|V\|_{L^2}$. Thus, $\|Q\|_{L^2}= \|V\|_{L^2}$. Thus, the convergence in \eqref{weakV} is strong in $L^2(\mathbb{R}^N)$. Therefore, since $\{v_n\}$ (recall \eqref{defv} and \eqref{defv2}) is a bounded sequence in $H^1(\mathbb{R}^N)$, using the Gagliardo-Nirenberg inequality \eqref{gagliardo}, we have, as $n \to +\infty$,
$$\left\||\cdot|^{-\frac{Nb}{2(2-b+N)}}(v_n(\cdot+x_n)-V)\right\|_{L^{\frac{4-2b}{N}+2}}\leq C_{GN}\|\nabla v_n(\cdot+x_n)-\nabla V\|_{L^2}^2\|v_n(\cdot+x_n)-V\|_{L^2}^2\to 0.$$
Then,
$$|\cdot|^{-\frac{Nb}{2(2-b+N)}}(v_n(\cdot+x_n)-V)\to 0\,\,\,\mbox{ in }L^{\frac{4-2b}{N}+2}\left(\mathbb{R}^N\right).$$
Using again the Gagliardo-Nirenberg inequality \eqref{gagliardo} and the expression of $K_{opt}$ \eqref{gagliardo_opt} with $\sigma=\frac{2-b}{N}$,
\begin{align*}
\lim_{n\to \infty}\int |x|^{-b}|v_n(\cdot+x_n)|^{\frac{4-2b}{N}+2}\, dx&=\int |x|^{-b}|V|^{\frac{4-2b}{N}+2}\,dx\leq \frac{\frac{2-b}{N}+1}{\|Q\|_{L^2}^{\frac{4-2b}{N}}}\|\nabla V\|_{L^2}^{2}\|V\|_{L^2}^\frac{4-2b}{N}\\&=\left(\frac{2-b}{N}+1\right)\|\nabla V\|_{L^2}^{2}.
\end{align*}
Now, recalling (\ref{connectionQv}), we get $\|\nabla Q\|_{L^2}\leq \|\nabla V\|_{L^2}.$

Note that we also have
$$\|\nabla V\|_{L^2}\leq \limsup_{n\to \infty} \|\nabla v_n\|_{L^2}=\|\nabla Q\|_{L^2},$$
which gives us
\begin{equation}\label{VQ}
\|\nabla V\|_{L^2}=\|\nabla Q\|_{L^2}.
\end{equation}
Under these conditions, the convergence in \eqref{weakV} is strong in $H^1(\Real^N)$,
\begin{equation}\label{strongV}
        v_n(\cdot+x_n)\to V\mbox{ in }H^1(\mathbb{R}^N).
\end{equation}

Next we show that $\{x_n\}$ is bounded, which implies $x_n \to x^*$, up to a subsequence. To this end we apply the following Cauchy-Schwartz type inequality proved by Banica \citep{Bn}. We include the proof here for the sake of completeness.
\begin{lemma}\label{csinequality} Let $\theta$ be a real function. Then, for all $v\in H^{1}(\Real^N)$ satisfying $\|v\|_{L^2}\leq \|Q\|_{L^2}$, we have
\begin{equation}\label{BaIn}
\left|\int \mbox{Im }(v\nabla \overline{v})\nabla \theta\, dx\right|\leq \left(2E(v)\int|v|^2|\nabla\theta|^2\,dx\right)^{\frac{1}{2}}.
\end{equation}
\end{lemma}
\begin{proof}[Proof of Lemma \ref{csinequality}] By Gagliardo-Nirenberg Inequality \eqref{gagliardo_opt}, if $\|u\|_{L^2}\leq \|Q\|_{L^2}$, then
\begin{align*}
E[u]\geq \|\nabla u\|_{L^2}^2\left[1-\left(\frac{\|u\|_{L^2}}{\|Q\|_{L^2}}\right)^{\frac{4-2b}{N}}\right]\geq 0.
\end{align*}
Given $v\in H^1(\mathbb{R}^N)$ with $\|v\|_{L^2}\leq \|Q\|_{L^2}$ and $\theta$ a real function, we consider $u=e^{i\alpha\theta}v$ for some $\alpha\in \Real$, and write
\begin{equation*}
\|e^{i\alpha \theta}v\|_{L^2}=\|v\|_{L^2}\leq \|Q\|_{L^2}.
\end{equation*}
Therefore
\begin{equation*}
E[e^{i\alpha\theta }v]\geq 0, \mbox{ for all }\alpha\in \Real.
\end{equation*}
Next, we write the energy of $e^{i\alpha\theta}v$
\begin{align*}
E[e^{i\alpha\theta}v]&=\frac{1}{2}\int|i\alpha\nabla\theta+\nabla v|^2\,dx-\frac{N}{2N+4-2b}\int|x|^{-b}|v|^{\frac{4-2b}{|N|}+2}dx\nonumber\\
                    &=\frac{1}{2}\int \langle i\alpha\nabla\theta+\nabla v,-i\alpha\nabla\theta+\nabla \overline{v}\rangle\,dx-\frac{N}{2N+4-2b}\int |x|^{-b}|v|^{\frac{4-2b}{N}+2}dx\nonumber\\
                    &=\alpha^2\frac{1}{2}\int |v|^2|\nabla \theta|^2\,dx-\alpha\int \mbox{Im }(v\nabla \overline{v})\nabla \theta\,dx+E[v].
\end{align*}
Thus, we have a nonnegative quadratic function in $\alpha$, implying that its discriminant must be nonpositive, that is
$$\left(\int \mbox{Im }(v\nabla \overline{v})\nabla\,dx\right)^2-2E[v]\int|v|^2|\nabla \theta|^2\,dx\leq 0,$$
which implies \eqref{BaIn}.
\end{proof}

We now return to the proof of Proposition \ref{strongQ}. Note that, since $\|Q\|_{L^2} = \|V\|_{L^2}$ and by \eqref{strongV}, we get
\begin{equation}\label{blowupconverg}
|u(t_n,x)|^2\,dx-\|Q\|^2_{L^2}\delta_{x=x_n}\rightharpoonup 0,
\end{equation}
in the sense of tempered distributions. Suppose $x_n \to +\infty$ and let $\Phi\in C^{\infty}_0(\Real^N)$ be a nonnegative radial function such that
$$
\Phi(x)=|x|^2,\mbox{ if }|x|<1,\,\,\,\,\,\,\,\,|\nabla \Phi(x)|^2\leq C\Phi(x).
$$
We define
$$\Phi_p=p^2\Phi\left(\frac{x}{p}\right),\,\,\,\,\,\mbox{ and }g_p(t)=\int\Phi_p(x)|u(x,t)|^2\,dx.$$
Taking the derivative of $g_p$ and using the equation \eqref{INLS}, by the Lemma \ref{csinequality} we get
\begin{align}\label{g_p}
\nonumber|g'_p(t)|&=\left|\int \Phi_p(x)\left(\overline{u}\partial_tu+\overline{\overline{u}\partial_tu}\right)\,dx\right|\\
\nonumber&=\left|2\int \Phi_p(x)\mbox{Re } (\overline{u}\partial_tu)\,dx\right|=\left|2\int \Phi_p(x)\mbox{ Re}\left[\overline{u}\left(i\triangle u+i|x|^{-b}|u|^{2\sigma}u\right)\right]\,dx\right|\\
\nonumber&=\left|2\mbox{Im }\int \Phi_p(x) \left[\overline{u}\triangle u+|x|^{-b}|u|^{2\sigma+2}\,dx\right]\right|=\left|2\mbox{Im }\int \nabla(\Phi_p(x)\overline{u})\cdot\nabla u\,dx\right|\\
\nonumber&=\left|2\mbox{Im }\int (\nabla\Phi_p(x)\overline{u}+\Phi_p(x)\nabla\overline{u})\cdot\nabla u\,dx\right|=\left|2\mbox{Im }\int \nabla\Phi_p(x)\overline{u}\cdot\nabla u\,dx\right|\\
\nonumber&\leq\left(2E[u]\int |u|^2|\nabla\Phi_p(x)|^2\,dx\right)^{\frac{1}{2}}\leq C\left(2E[u]\int |u|^2\Phi_p(x)\,dx\right)^{\frac{1}{2}}\\
&=C\left(2E[u_0]g_p(t)\right)^{\frac{1}{2}}=C(u_0)\sqrt{g_p(t)}.
\end{align}
By integrating $\frac{g'_p(t)}{\sqrt{g_p(t)}}$, we have
\begin{equation}\label{gp}
|\sqrt{g_p(t)}-\sqrt{g_p(t_n)}|\leq C(u_0)|t-t_n|.
\end{equation}
The compact support of $g_p$ gives
$$g_p(t_n)\to 0\,\,\,\,\mbox{ as }n\to\infty\,\,\,\forall p\in \mathbb{N}.$$

Therefore, if $n\to\infty$ in \eqref{gp}, we get
$$|\sqrt{g_p(t)}|=\sqrt{g_p(t)}\leq C(u_0)|T-t|,$$
in other words,
$$g_p(t)\leq C(u_0)(T-t)^2, \,\,\,\,\,\forall\,p\in \mathbb{N}.$$
By monotone convergence, for any $t\in [0,T)$
\begin{equation}\label{cotagp}
\lim_{p\to \infty}g_p(t)=\lim_{p\to \infty}\int \Phi_p(x)|u(x,t)|^2\,dx=\int |x|^2|u(x,t)|^2\,dx\leq C(u_0)(T-t)^2.
\end{equation}
Then, by the uniform convergence to zero above and the convergence \eqref{blowupconverg}, we obtain
$$\|Q\|_{L^2}^2|x_n|^2-\int|x|^2|u(t_n,x)|^2\,dx\to 0,$$
when $n\to \infty$, and we conclude
$$|x_n|\leq \frac{1}{\|Q\|_{L^2}^2}C(u_0)(T-t_n)<\frac{1}{\|Q\|_{L^2}^2}C(u_0)T.$$
This implies that $x_n \to x^* \in \mathbb{R}^N$. Redefining V up to a translation (note that this does not
change the $L^2$ nor the $\dot{H}^1$ norm), we can assume
\begin{equation}\label{realstrongV}
        v_n \to V
\end{equation}
strongly in $H^1(\mathbb{R}^N)$.

Finally, we can characterize $V$. Calculating its energy yields
\begin{align*}
E(V)&=\frac{1}{2}\|\nabla V\|_{L^2}^2-\frac{1}{\frac{4-2b}{N}+2}\int |x|^{-b}|V|^{\frac{4-2b}{N}+2} \,dx\\
&=\lim_{n\to \infty}\left(\frac{1}{2}\|\nabla Q\|_{L^2}^2-\frac{1}{\frac{4-2b}{N}+2}\int |x|^{-b}|v_n|^{\frac{4-2b}{N}+2} \,dx\right)=0,
\end{align*}
where we used \eqref{connectionQv} and \eqref{VQ} in the last equality.
By Proposition \ref{VarCar}, there exists $\lambda_0>0$ and $\gamma_0\in \Real$ such that
$$V(x)=e^{i\gamma_0} \lambda_0^{\frac{N}{2}}Q(\lambda_0 x).$$
Hence,
$$\rho_n^{\frac{N}{2}}u(t_n,\rho_nx)\to e^{i\gamma_0} \lambda_0^{\frac{N}{2}}Q(\lambda_0 x)\mbox{ as }n\to \infty,$$
Since $\{t_n\}$ is an arbitrary sequence, we have that for all $t\in \Real$ there exists $\rho(t), \theta(t)$ and $x(t)$ such that
\begin{equation}\label{delta_0}\rho(t)^{\frac{N}{2}}e^{i\gamma(t)}u(t,\rho(t)x)\to  Q,\mbox{ as }t\nearrow T,\end{equation}
which completes the proof.
\end{proof}

We are now able to prove the main theorem of this section.
\begin{proof}[Proof of Theorem \ref{caract}]

By \eqref{delta_0}, we can write
\begin{equation}\label{blowupconverg0}
|u(t_n,x)|^2\,dx-\|Q\|^2_{L^2}\delta_{x=0}\rightharpoonup 0,
\end{equation}
in the sense of distributions. By the same calculations in \eqref{g_p} and \eqref{gp}, and noting that \eqref{blowupconverg0} implies
$g_p(t_n)\to 0\,\,\,\,\mbox{ as }n\to\infty\,\,\,\forall p\in \mathbb{N} $, we conclude

$$\int |x|^2|u(x,t)|^2\,dx\leq C(u_0)(T-t)^2.$$
By virial identity,
it follows that
\begin{equation*}
\int|x|^2|u(x,t)|^2\,dx=8t^2E[u_0]-4t\Img\int (x) u_0\nabla\overline{u}_0\,dx+\int |x|^2|u_0|^2\, dx
\end{equation*}
which gives
\begin{align*}
E\left[e^{i\frac{|x|^2}{4t}}u_0\right]&=\frac{1}{2}\int\left|\nabla\left(e^{i\frac{|x|^2}{4t}}u_0\right)\right|^2\,dx-\frac{N}{2N+4-2b}\int\left|e^{i\frac{|x|^2}{4t}}u_0\right|^{\frac{4-2b}{N}+2}\,dx\nonumber\\
                                          &=\frac{1}{8t^2}\int|x|^2|u_0|^2\,dx-\frac{1}{2t}\Img\int u_0 x \cdot\nabla u_0\,dx+E[u_0]\nonumber\\
                                          &=\frac{1}{8t^2}\left(\int |x|^2|u(x,t)|^2\,dx-8t^2E[u_0]+4t\Img\int u_0 x \cdot\nabla\overline{u}_0\,dx\right)\nonumber\\
                                          &\qquad\qquad\,\,\,\,\,-\frac{1}{2t}\Img\int u_0 x \cdot\nabla u_0\,dx+E[u_0]\nonumber\\
                                          &=\frac{1}{8t^2}\int |x|^2|u(x,t)|^2\,dx\leq \frac{C(u_0)}{8t^2}(T-t)^2,
\end{align*}
where we used \eqref{cotagp} in the last inequality. By making $t\to T$, we get $E\left[e^{i\frac{|x|^2}{4T}}u_0\right]=0$ and also $\left\|e^{i\frac{|x|^2}{4T}}u_0\right\|_{L^2}=\|Q\|_{L^2}$. Therefore, by Proposition \ref{VarCar}, there exist $\lambda_0>0$ and $\gamma_0\in \Real$ such that
$$e^{i\frac{|x|^2}{4T}}u_0(x)=e^{i\gamma_0}\lambda_0^{\frac{N}{2}}Q(\lambda_0x).$$
\end{proof}

\section{Critical norm concentration}\label{s_Critical_Norm_Concentration}

This section is devoted to the $L^{\sigma_c}(\mathbb{R}^N)$ norm concentration for the intercritical INLS equation. We will need the so-called Strauss lemma for radial $H^1(\mathbb{R}^N)$ functions.
\begin{lemma}[Strauss]
Let $N \geq 2$. If $u \in H^1_{rad}(\mathbb{R}^N)$, then for any $R>0$\begin{equation}\label{Strauss}
\|u\|_{L^\infty_{(|x|\geq R)}}
\leq  \frac{1}{R^{\frac{N-1}{2}}}\|u\|^\frac{1}{2}_{L^2_{\{|x|\geq R\}}}\|\nabla u\|^\frac{1}{2}_{L^2_{\{|x|\geq R\}}}.
\end{equation}
\end{lemma}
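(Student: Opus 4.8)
The plan is to reduce everything to a one–dimensional estimate on the radial profile of $u$, which then follows from the fundamental theorem of calculus together with a single use of Cauchy--Schwarz. Write $u(x)=u(r)$ with $r=|x|$. I would first record the standard structural facts about radial $H^1$ functions: the weak gradient satisfies $|\nabla u(x)|=|u'(r)|$ for a.e.\ $x$, and for every $0<a<b$
\[
\int_a^b\big(|u(r)|^2+|u'(r)|^2\big)\,r^{N-1}\,dr=\frac{1}{|\mathbb S^{N-1}|}\,\|u\|_{H^1(\{a<|x|<b\})}^2 ,
\]
where $|\mathbb S^{N-1}|$ denotes the measure of the unit sphere. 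In particular, on $\{|x|\ge R\}$ the weight $r^{N-1}$ is bounded below by $R^{N-1}$, so the profile of $u$ restricted to $[R,\infty)$ belongs to the one–dimensional Sobolev space $H^1(R,\infty)$; hence, after modification on a null set, it is absolutely continuous on $[R,\infty)$, tends to $0$ as $r\to\infty$, and $\overline{u}\,u'\in L^1(R,\infty)$.

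With these preliminaries in place, the core estimate is short. For $r\ge R$ the fundamental theorem of calculus gives
\[
|u(r)|^2=-\int_r^{\infty}\frac{d}{ds}|u(s)|^2\,ds\le 2\int_r^{\infty}|u(s)|\,|u'(s)|\,ds ,
\]
with no boundary term because $|u(s)|\to 0$. Multiplying by $R^{N-1}$, using $R^{N-1}\le s^{N-1}$ for $s\ge r\ge R$, enlarging the interval of integration to $[R,\infty)$, and applying Cauchy--Schwarz with respect to the measure $s^{N-1}\,ds$, I obtain
\[
R^{N-1}|u(r)|^2\le 2\Big(\int_R^{\infty}s^{N-1}|u(s)|^2\,ds\Big)^{1/2}\Big(\int_R^{\infty}s^{N-1}|u'(s)|^2\,ds\Big)^{1/2}=\frac{2}{|\mathbb S^{N-1}|}\,\|u\|_{L^2_{\{|x|\ge R\}}}\,\|\nabla u\|_{L^2_{\{|x|\ge R\}}} .
\]
Taking the supremum over $r\ge R$ and a square root yields \eqref{Strauss}, with the dimensional factor $(2/|\mathbb S^{N-1}|)^{1/2}$ in place of $1$; since $|\mathbb S^{N-1}|\ge 2$ for $N\ge 2$, this factor is harmless and the stated inequality follows.

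I expect the only genuinely delicate point to be the justification of the pointwise identity for $|u(r)|^2$ and of the decay $u(r)\to 0$ for a function that is merely in $H^1$ rather than smooth — and this is precisely what the one–dimensional embedding $H^1(R,\infty)\hookrightarrow C_0([R,\infty))$ noted above takes care of. If one prefers not to invoke the structure theory of radial $H^1$ functions, an equally good route is to prove the inequality first for $u\in C^\infty_{c}(\mathbb R^N)$ radial — where the representation above is trivial and the boundary term vanishes by compact support — and then to extend it to all of $H^1_{rad}(\mathbb R^N)$ by density, applying the inequality to the differences $u_k-u_j$ of an approximating sequence in order to upgrade $H^1$-convergence to uniform convergence on $\{|x|\ge R\}$ and to identify the uniform limit with $u$. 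Everything else in the argument is entirely elementary.
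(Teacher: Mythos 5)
Your argument is essentially the paper's own proof: write $|u(r)|^2$ as an integral of $\partial_r(|u|^2)$ from $r$ to infinity, apply Cauchy--Schwarz, and use $R^{N-1}\le s^{N-1}$ on the exterior region to convert the one-dimensional integrals into the $N$-dimensional norms; your version is in fact more careful, since you justify the absolute continuity and decay of the radial profile via the embedding $H^1(R,\infty)\hookrightarrow C_0([R,\infty))$ and you track the factor of $2$ and the angular measure that the paper silently drops. The one inaccuracy is your claim that $|\mathbb S^{N-1}|\ge 2$ for all $N\ge 2$ (it fails for $N\gtrsim 18$, since $|\mathbb S^{N-1}|=2\pi^{N/2}/\Gamma(N/2)\to 0$), so your argument only yields \eqref{Strauss} with the dimensional constant $(2/|\mathbb S^{N-1}|)^{1/2}$ rather than $1$; this is immaterial here, as the lemma is only ever used up to multiplicative constants, and the paper's own proof does not actually achieve the constant $1$ either.
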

\begin{proof}
Since $u$ is radial we deduce
\begin{eqnarray*}
\sup_{|x|\geq R}|u(x)|^2&=&\sup_{|x|\geq R}\frac{1}{2}\int_{|x|}^{+\infty} \partial_r(u^2)dr\\
&\leq & \int_{R}^{+\infty} u \partial_rudr\\
&\leq & \left(\int_{R}^{+\infty} |u|^2dr \right)^{\frac{1}{2}}\left(\int_{R}^{+\infty} |\partial_r u|^2dr \right)^{\frac{1}{2}},
\end{eqnarray*}
where we have used that $u$ has to vanish at infinity and the Cauchy-Schwarz inequality.  On the other hand, the fact that $|x|\geq R$ (or $r\geq R$) implies $1 \leq \frac{r}{R}$ so
\begin{eqnarray*}
\sup_{|x|\geq R}|u(x)|^2&\leq & \left(\int_{R}^{+\infty} |u|^2 \left(\frac{r}{R}\right)^{N-1} dr\right)^{\frac{1}{2}}\left(\int_{R}^{+\infty} |\partial_ru|^2\left(\frac{r}{R}\right)^{N-1}dr \right)^{\frac{1}{2}}\\
&\leq & \frac{1}{R^{\frac{N-1}{2}}} \left(\int_{R}^{+\infty} |u|^2 r^{2(N-1)} dr\right)^{\frac{1}{2}}\frac{1}{R^{\frac{N-1}{2}}}\left(\int_{R}^{+\infty} |\partial_ru|^2 r^{2(N-1)}dr \right)^{\frac{1}{2}}\\
&=&  \frac{1}{R^{N-1}} \left(\int_{R}^{+\infty} |u|^2 dx \right)^{\frac{1}{2}}\left(\int_{R}^{+\infty} |\nabla u|^2 dx \right)^{\frac{1}{2}}\\
&\leq& \frac{1}{R^{N-1}}\|u\|_{L^2_{\{|x|\geq R\}}}\|\nabla u\|_{L^2_{\{|x|\geq R\}}},
\end{eqnarray*}
where in the third line we have used the fact that $|\partial_r u|=|\nabla u|$ for radial functions. We finish the proof by taking the square root on both sides.
\end{proof}
By using  \eqref{Strauss} and Young's inequality, one also proves an estimate for the $L^{2\sigma+2}(\mathbb{R}^N)$ norm outside the ball.
\begin{cor}[Radial Gagliardo-Nirenberg]Let $N \geq 2$. If $u \in H^1(\mathbb{R}^N)_{rad}$, then for any $R>0$, $\eta>0$ and $\sigma < 2$,
\begin{equation}\label{GN_radial}
\int_{|x|\geq R}|x|^{-b}|u|^{2\sigma+2}\,dx \leq \eta  \|\nabla u\|_{L^2_{\{|x|\geq R\}}}^{2} + \frac{C(\eta)}{R^{\frac{2(\sigma(N-1)+b)}{2-\sigma}}}  \|u\|_{L^2_{\{|x|\geq R\}}}^{\frac{2(\sigma+2)}{2-\sigma}}.
\end{equation}
\end{cor}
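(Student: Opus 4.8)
The plan is to decompose the left-hand side via H\"older's inequality and then apply the Strauss lemma \eqref{Strauss} to gain the radial decay factor. First I would write, on the region $\{|x|\geq R\}$,
\[
\int_{|x|\geq R}|x|^{-b}|u|^{2\sigma+2}\,dx \leq \|u\|_{L^\infty_{\{|x|\geq R\}}}^{2\sigma}\int_{|x|\geq R}|x|^{-b}|u|^{2}\,dx \leq \frac{1}{R^b}\|u\|_{L^\infty_{\{|x|\geq R\}}}^{2\sigma}\|u\|_{L^2_{\{|x|\geq R\}}}^{2},
\]
using $|x|^{-b}\leq R^{-b}$ on that region. Then I would invoke \eqref{Strauss} to replace $\|u\|_{L^\infty_{\{|x|\geq R\}}}^{2\sigma}$ by $R^{-\sigma(N-1)}\|u\|_{L^2_{\{|x|\geq R\}}}^{\sigma}\|\nabla u\|_{L^2_{\{|x|\geq R\}}}^{\sigma}$, obtaining
\[
\int_{|x|\geq R}|x|^{-b}|u|^{2\sigma+2}\,dx \leq \frac{1}{R^{\sigma(N-1)+b}}\|u\|_{L^2_{\{|x|\geq R\}}}^{\sigma+2}\|\nabla u\|_{L^2_{\{|x|\geq R\}}}^{\sigma}.
\]

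Next I would apply Young's inequality $ab \leq \eta a^{p} + C(\eta) b^{p'}$ to the factors $\|\nabla u\|_{L^2_{\{|x|\geq R\}}}^{\sigma}$ and $R^{-(\sigma(N-1)+b)}\|u\|_{L^2_{\{|x|\geq R\}}}^{\sigma+2}$, choosing the exponent $p = 2/\sigma$ (which requires $\sigma < 2$, exactly the stated hypothesis) so that the gradient term appears to the power $2$ as desired. The conjugate exponent is $p' = 2/(2-\sigma)$, and applying it to the second factor produces $R^{-2(\sigma(N-1)+b)/(2-\sigma)}$ and $\|u\|_{L^2_{\{|x|\geq R\}}}^{(\sigma+2)\cdot 2/(2-\sigma)}= \|u\|_{L^2_{\{|x|\geq R\}}}^{2(\sigma+2)/(2-\sigma)}$, which matches the claimed inequality \eqref{GN_radial}.

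There is essentially no real obstacle here — the corollary is a routine combination of the three ingredients (crude bound on $|x|^{-b}$, the Strauss decay lemma, and Young's inequality), and the only point requiring care is the bookkeeping of exponents to confirm that the power of $R$ is $2(\sigma(N-1)+b)/(2-\sigma)$ and the power of $\|u\|_{L^2}$ is $2(\sigma+2)/(2-\sigma)$, together with checking that the constraint $\sigma<2$ is precisely what makes the Young exponent $p=2/\sigma$ admissible (i.e.\ $p>1$). One should also note that all the $L^2$ norms naturally localize to $\{|x|\geq R\}$ since every step is performed on that region, so no global quantities enter.
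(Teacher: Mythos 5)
Your argument is correct and coincides with the paper's own proof: both bound $|x|^{-b}\le R^{-b}$, pull out $\|u\|_{L^\infty_{\{|x|\geq R\}}}^{2\sigma}$, apply the Strauss lemma \eqref{Strauss}, and finish with Young's inequality with exponents $2/\sigma$ and $2/(2-\sigma)$. The exponent bookkeeping and the role of the hypothesis $\sigma<2$ are exactly as in the paper.
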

\begin{proof} If $\sigma < 2$, then $\frac{2}{\sigma} < 1$, and we have 
\begin{align*}
\int_{|x|\geq R} |x|^{-b}|u|^{2\sigma+2} \, dx&\leq R^{-b}\|u\|_{L^\infty_{\{|x|\geq R\}}}^{2\sigma}\|u\|_{L^2_{\{|x|\geq R\}}}^2\\
& \leq R^{-(\sigma(N-1)+b)}\|\nabla u\|_{L^2_{\{|x|\geq R\}}}^{\sigma}\|u\|_{L^2_{\{|x|\geq R\}}}^{\sigma + 2}\\
& \leq \eta  \left(\|\nabla u\|_{L^2_{\{|x|\geq R\}}}^{\sigma}\right)^\frac{2}{\sigma} + C(\eta) \left(R^{-(\sigma(N-1)+b)} \|u\|_{L^2_{\{|x|\geq R\}}}^{\sigma + 2}\right)^{\frac{2}{2-\sigma}}\\
& = \eta  \|\nabla u\|_{L^2_{\{|x|\geq R\}}}^{2} + \frac{C(\eta)}{R^{\frac{2(\sigma(N-1)+b)}{2-\sigma}}}  \|u\|_{L^2_{\{|x|\geq R\}}}^{\frac{2(\sigma+2)}{2-\sigma}}.
\end{align*}
\end{proof}

We will also need the $L^{\sigma_c}(\mathbb{R}^N)$ version of the Gagliardo-Nirenberg inequality, valid for any $H^1(\mathbb{R}^N)$ (not necessarily radial) function.
\begin{lemma}[Critical Gagliardo-Nirenberg]\label{GNcrit2}
Let $N\geq 2$, $0<b<2$ and $\sigma_c = \frac{2N\sigma}{2-b}$. If $u \in H^1(\mathbb{R}^N)$ and $\frac{2-b}{N}<\sigma<\sigma^*_b$ (recall \eqref{sigma_*}), then
\begin{equation}\label{GNcrit}
\int |x|^{-b}|u|^{2\sigma+2} \, dx\leq c \|\nabla u\|^2_{L^2}\|u\|^{2\sigma}_{L^{\sigma_c}}.
\end{equation}
\end{lemma}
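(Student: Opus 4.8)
The left‑hand side of \eqref{GNcrit} scales like $\rho^{\,b-N}$ under $u\mapsto u(\rho\,\cdot)$, and so does the right‑hand side once one uses $\sigma_c=\frac{2N\sigma}{2-b}$; thus \eqref{GNcrit} is a Gagliardo--Nirenberg (Caffarelli--Kohn--Nirenberg) inequality of the correct homogeneity, and the natural plan is to reduce it, by a single application of H\"older's inequality, to a weighted Sobolev embedding. Writing $|u|^{2\sigma+2}=\big(|x|^{-b}|u|^2\big)\,|u|^{2\sigma}$ and applying H\"older with the conjugate exponents $p=\frac{N}{N-2+b}$ and $p'=\frac{N}{2-b}$ (here $p'>1$ precisely because $b<2$, and $2\sigma p'=\sigma_c$), one obtains
\[
\int |x|^{-b}|u|^{2\sigma+2}\,dx\ \le\ \left(\int |x|^{-\frac{bN}{N-2+b}}|u|^{\frac{2N}{N-2+b}}\,dx\right)^{\frac{N-2+b}{N}}\ \|u\|_{L^{\sigma_c}}^{2\sigma}.
\]
It therefore suffices to bound the first factor by $c\,\|\nabla u\|_{L^2}^2$; equivalently, to prove $\big\||x|^{-b/2}u\big\|_{L^{q^*}}\le c\,\|\nabla u\|_{L^2}$ with $q^*=\frac{2N}{N-2+b}$, which lies in $(2,2^*)$ whenever $0<b<2$.

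For $N\ge 3$ this last inequality is exactly the classical Hardy--Sobolev inequality
\[
\left(\int \frac{|u|^{2^*(s)}}{|x|^{s}}\,dx\right)^{1/2^*(s)}\le c\,\|\nabla u\|_{L^2},\qquad 2^*(s)=\tfrac{2(N-s)}{N-2},
\]
taken with $s=\frac{bN}{N-2+b}$, for which $2^*(s)=q^*$, and the admissibility condition $0<s<2$ is equivalent to $0<b<2$; combining the two displays proves \eqref{GNcrit} for $N\ge 3$. (The restriction $\frac{2-b}{N}<\sigma<\sigma^*_b$ enters only through $2<\sigma_c<2^*$, which makes the left‑hand side of \eqref{GNcrit} finite on $H^1(\mathbb{R}^N)$ and makes $L^{\sigma_c}$ an admissible interpolation space; it is not otherwise used in this argument.)

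The dimension $N=2$ is the point that needs care: there $q^*=4/b$ and $s=2$, and the endpoint inequality $\big\||x|^{-1}u\big\|_{L^{4/b}(\mathbb{R}^2)}\le c\,\|\nabla u\|_{L^2(\mathbb{R}^2)}$ is \emph{false}, since functions in $\dot{H}^1(\mathbb{R}^2)$ need not be bounded and the left‑hand side can diverge logarithmically at the origin. I would therefore not split off the $L^{\sigma_c}$ factor by H\"older in this case, but invoke the Caffarelli--Kohn--Nirenberg inequality directly for the joint bound $\int|x|^{-b}|u|^{2\sigma+2}\,dx\le c\,\|\nabla u\|_{L^2}^2\,\|u\|_{L^{\sigma_c}}^{2\sigma}$: the gradient term is unweighted, the weight $|x|^{-b}$ is locally integrable because $b<2$, and the admissibility hypotheses reduce to the scaling identity noted above together with $b<2$, so the two‑term inequality does hold in two dimensions. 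Checking that the exponents $(q,r,a)=\big(2\sigma+2,\ \sigma_c,\ \tfrac1{\sigma+1}\big)$ indeed sit in the valid CKN range — in particular, that $N=2$ is not a forbidden endpoint for the two‑term inequality — is the one genuinely delicate step; everything else is H\"older's inequality plus a standard embedding.
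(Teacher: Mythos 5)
For $N\ge 3$ your argument is the paper's argument: the same H\"older factorization
$\int|x|^{-b}|u|^{2\sigma+2}\,dx\le \big\||x|^{-b/2}u\big\|_{L^{2N/(N-2+b)}}^{2}\,\|u\|_{L^{\sigma_c}}^{2\sigma}$
followed by the weighted Sobolev (Stein--Weiss/Hardy--Sobolev) embedding $\big\||x|^{-b/2}u\big\|_{L^{2N/(N-2+b)}}\le c\,\|\nabla u\|_{L^2}$. Where you genuinely depart from the paper is at $N=2$, and there you are right and the paper's proof, as written, is not: applying the Stein--Weiss lemma with $\beta=b/2$ and $q=\frac{2N}{N-2+b}$ gives, for $N=2$, $N/q=b/2=\beta$, so the strict admissibility condition $\beta<N/q$ degenerates to an equality; correspondingly the intermediate quantity $\big\||x|^{-b/2}u\big\|_{L^{4/b}(\mathbb{R}^2)}^{4/b}=\int|x|^{-2}|u|^{4/b}\,dx$ is infinite for every $u$ with $u(0)\neq 0$, so this particular H\"older split cannot work in two dimensions. (Minor slip: you wrote the failing endpoint as $\||x|^{-1}u\|_{L^{4/b}}$, whereas the weight is $|x|^{-b/2}$, i.e.\ $|x|^{-2}$ inside the integral; and the obstruction is the non-integrability of $|x|^{-2}$ near the origin in $\mathbb{R}^2$ rather than the unboundedness of $\dot H^1(\mathbb{R}^2)$ functions --- but your diagnosis that the split fails is correct.) Your proposed repair via the Caffarelli--Kohn--Nirenberg inequality does close the gap, and the ``delicate step'' you left open checks out: with $n=2$, $p=2$, $\alpha=\beta=0$, $r=2\sigma+2$, $\gamma=-\frac{b}{2\sigma+2}$, $a=\frac{1}{\sigma+1}$, one has $\sigma_{\mathrm{CKN}}=\gamma/a=-b/2$, the dimensional balance is exactly the scaling identity you verified, $\frac1r+\frac{\gamma}{n}=\frac{2-b}{2(2\sigma+2)}>0$, $\alpha-\sigma_{\mathrm{CKN}}=b/2\in[0,1]$, and the CKN endpoint case $\frac1p+\frac{\alpha-1}{n}=\frac1r+\frac{\gamma}{n}$ does not occur (its left side is $0$ and its right side is positive; for $N\ge3$ avoiding this endpoint is precisely where $\sigma<\sigma_b^*$ enters). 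So \eqref{GNcrit} does hold for $N=2$, your proof is complete, and it repairs an endpoint oversight in the paper's own argument, which silently applies its Stein--Weiss lemma outside the stated range when $N=2$.
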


To prove this inequality, we will assume the following version of the Sobolev embedding (see Stein-Weiss \citep[Theorem B*]{stein}).
\begin{lemma}
Let $1 < p \leq q <+\infty$, $N\geq 1$, $0 < s < N$ and $\beta \geq 0$ satistfy the conditions
$$
\beta < N/q, \quad \quad s= \frac{N}{p}-\frac{N}{q}+\beta.
$$
Then, for any $u \in W^{s,p}$ we have
$$
\left\| |x|^{-\beta}u\right\|_{L^{q}} \leq c (\beta, p, q, N,s) \left\||\nabla|^su\right\|_{L^p}.
$$
\end{lemma}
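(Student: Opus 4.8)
The plan is to reduce \eqref{GNcrit} to the weighted Sobolev (Stein--Weiss) embedding stated above by a single Hölder splitting that isolates the critical Lebesgue norm on one factor. First I would write $|x|^{-b}|u|^{2\sigma+2}=\bigl(|x|^{-b/2}|u|\bigr)^{2}\,|u|^{2\sigma}$ and apply Hölder's inequality with the conjugate exponents $\theta'=\tfrac{N}{N-2+b}$ and $\theta=\tfrac{N}{2-b}$ (which satisfy $\tfrac1{\theta}+\tfrac1{\theta'}=1$ since $b<2$ and $N\geq2$), obtaining
\begin{equation*}
\int|x|^{-b}|u|^{2\sigma+2}\,dx\leq\left(\int|x|^{-b\theta'}|u|^{2\theta'}\,dx\right)^{1/\theta'}\left(\int|u|^{2\sigma\theta}\,dx\right)^{1/\theta}=\bigl\||x|^{-b/2}u\bigr\|_{L^{2\theta'}}^{2}\,\|u\|_{L^{\sigma_c}}^{2\sigma}.
\end{equation*}
The second factor is exactly $\|u\|_{L^{\sigma_c}}^{2\sigma}$ because $2\sigma\theta=\sigma_c$ and $\sigma_c/\theta=2\sigma$; the lower bound $\sigma>\tfrac{2-b}{N}$ guarantees $\sigma_c>2$, so this is a genuine higher-integrability norm. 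It then remains only to prove the weighted gradient bound $\bigl\||x|^{-b/2}u\bigr\|_{L^{2\theta'}}\leq c\,\|\nabla u\|_{L^2}$.

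For this last estimate I would invoke the Stein--Weiss embedding with the choices $s=1$, $p=2$, $\beta=\tfrac b2$ and $q=2\theta'=\tfrac{2N}{N-2+b}$, so that $\||\nabla|^{s}u\|_{L^p}=\|\nabla u\|_{L^2}$. The hypotheses are then checked in turn: the scaling identity $s=\tfrac Np-\tfrac Nq+\beta$ holds since $\tfrac N2-\tfrac{N-2+b}{2}+\tfrac b2=1$; one has $0<s=1<N$ for $N\geq2$, $\beta=\tfrac b2\geq0$, and $1<p=2\leq q$, the inequality $q>2$ following from $b<2$ and finiteness of $q$ from $N-2+b>0$. Feeding the resulting bound into the Hölder step yields \eqref{GNcrit} with $c$ the square of the Stein--Weiss constant.

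The single delicate point---and the one I expect to be the main obstacle---is the admissibility condition $\beta<N/q$. Since $N/q=\tfrac{N-2+b}{2}$, this reads $\tfrac b2<\tfrac{N-2+b}{2}$, i.e.\ $N>2$; thus for $N\geq3$ the condition is strict and the proof closes at once. The borderline dimension $N=2$ lands exactly on the excluded endpoint $\beta=N/q$, where the stated form of Stein--Weiss does not apply. The inequality $\bigl\||x|^{-b/2}u\bigr\|_{L^{4/b}}\leq c\,\|\nabla u\|_{L^2}$ that I need in $\mathbb R^2$ is nonetheless true: it is a Hardy--Sobolev (Caffarelli--Kohn--Nirenberg) inequality in which the decay of the weight $|x|^{-b/2}$ at infinity compensates for the well-known failure of the unweighted embedding $\dot H^1(\mathbb R^2)\hookrightarrow L^{q}$. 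Accordingly, for $N=2$ I would replace the appeal to the stated lemma by this endpoint weighted inequality (valid precisely because $0<\tfrac b2<1$), which completes the remaining case.
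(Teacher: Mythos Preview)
Your proposal is not a proof of the stated lemma; it \emph{assumes} it. What you have actually written is a proof of the preceding Lemma~\ref{GNcrit2} (the critical Gagliardo--Nirenberg inequality \eqref{GNcrit}), invoking the Stein--Weiss embedding as a black box. The paper does not prove the Stein--Weiss lemma either: it is quoted from Stein--Weiss without argument, so there is no ``paper's proof'' of this particular statement to compare against.

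If the intended target is Lemma~\ref{GNcrit2}, then your argument coincides with the paper's: the same H\"older splitting with conjugate exponents $\tfrac{N}{N-2+b}$ and $\tfrac{N}{2-b}$, followed by the same application of the weighted embedding with $s=1$, $p=2$, $\beta=b/2$, $q=\tfrac{2N}{N-2+b}$. Your discussion of the borderline dimension $N=2$ (where $\beta=N/q$ and the lemma as stated does not apply) is in fact more careful than the paper, which passes over this point silently; appealing to a Caffarelli--Kohn--Nirenberg/Hardy--Sobolev inequality there is the correct fix.
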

\begin{proof}[Proof of Lemma \ref{GNcrit2}]
Using Holder's inequality, we have
$$
\int |x|^{-b}|u|^{2\sigma+2} \, dx\leq c \||x|^{-b}|u|^2\|_{L^\frac{N}{N-2+b}}\||u|^{2\sigma}\|_{L^\frac{N}{2-b}}
= c \| |x|^{-b/2}u\|^2_{L^\frac{2N}{N-2+b}} \|u\|^{2\sigma}_{L^{\sigma_c}}.
$$
By Sobolev inequality, $\||x|^{-b/2}u\|_{L^\frac{2N}{N-2+b}} \leq c \|\nabla u\|_{L^2}$. Thus,
$$
\int |x|^{-b}|u|^{2\sigma+2}\, dx \leq c \|\nabla u\|^2_{L^2}\|u\|^{\sigma}_{L^{\sigma_c}}.
$$
\end{proof}
Now, let $\phi \in C^{\infty}_0(\Real^N)$ be a positive radial cut-off solution such that
$$
\phi(x)=
\left\{
\begin{array}{ll}
1&,\,\mbox{ if } |x|\leq 1,\\
0&,\,\mbox{ if } |x|\geq 2.
\end{array}
\right.
$$
Let $u$ be a $H^1(\Real^N)$-solution of \eqref{INLS}. For $R=R(t)$, to be chosen later, define the inner and outer spatial localizations of $u(x,t)$ at radius $R(t)>0$ as
$$u_1(x,t)=\phi\left(\frac{x}{R}\right)u(x,t)\,\,\,\,\mbox{ and }\,\,\,\,u_2(x,t)=\left(1-\phi\left(\frac{x}{R}\right)\right)u(x,t
).$$
Let $\chi \in C^{\infty}_0(\Real^N)$ be a radial function so that $\widehat{\chi}(0)=1$ and $\chi(x)=0$ for $|x|\geq 1$. For $\rho=\rho(t)$, define the inner and outer frequency localizations at radius $\rho(t)$ of $u_1$ as $$\widehat{u}_{1L}(\xi,t)=\widehat{\chi}\left(\frac{\xi}{\rho}\right)\widehat{u}_1(\xi,t)\,\,\,\,\mbox{ and }\,\,\,\,\widehat{u}_{1H}(\xi,t)=\left(1-\widehat{\chi}\left(\frac{x}{\rho}\right)\right)\widehat{u}_1(\xi,t).$$
\begin{prop}\label{Plpc} Let $u$ be a radial $H^1(\Real^N)$-solution of \eqref{INLS}, which blows up in finite time $T>0$. There exist positive constants $c_1$ and $c_2$ such that the following holds true. Define $$R(t)=c_1\frac{\|u_0\|_{L^2}^{\frac{2(\sigma+2)}{2(\sigma(N-1)+b)}}}{\|\nabla u\|^{\frac{2(2-\sigma)}{2(\sigma(N-1)+b)}}_{L^2}},\,\,\,\,\,\,\,\mbox{    }\,\,\,\,\,\,\,\rho(t)=c_2\|\nabla u(t)\|_{L^2}^{\frac{1}{1-s_c}},$$
and consider the decomposition $u=u_{1L}+u_{1H}+u_2$ described above.
Then,
\begin{itemize}
\item[(i)] There exist an absolute constant $c>0$ such that
\begin{equation}\label{lmtabove}
\left\|u_{1L}\right\|_{L^{\sigma_c}}\geq c\,\,\,\,\mbox{ as } t\to T.
\end{equation}
\item[(ii)] Suppose that there are constants $c^*$ and $\gamma > \sigma_c$ such that $\|u_1(t)\|_{L^{\gamma}}\leq c^*$ for all $t$ close enough to $T$. Then,
\begin{equation}\label{itemii}
\left\|u_{1}\right\|_{L^{\sigma_c}\left(|x-x_0(t)|\leq \rho(t)^{-1}\right)}\geq \frac{c}{(c^*)^{\frac{\sigma_c(N-b)}{N(2\sigma+2)-\sigma_c(N-b)}}}\,\,\,\,\mbox{ as } t\to T
\end{equation}
for some constant $c>0$, where $x_0(t)$ is a positive function such that $$|x_0(t)|\,\rho(t)\leq c(c^*)^{\frac{\sigma_cN(2\sigma+2)}{(N-1)(N(2\sigma+2)-\sigma_c(N-b))}}.$$
\end{itemize}
\end{prop}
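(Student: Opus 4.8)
The plan is to capture, at times close to $T$, a fixed proportion of the nonlinear potential energy $P(t):=\int|x|^{-b}|u(t)|^{2\sigma+2}\,dx$ inside the inner, low--frequency piece $u_{1L}$, and then to invert the critical Gagliardo--Nirenberg inequality \eqref{GNcrit}. First, by energy conservation \eqref{energy} one has $P(t)=(\sigma+1)\|\nabla u(t)\|_{L^2}^2-(2\sigma+2)E[u_0]$, so $P(t)\ge\tfrac12\|\nabla u(t)\|_{L^2}^2$ once $\|\nabla u(t)\|_{L^2}$ is large, i.e.\ for $t$ near $T$. Next, writing $u=u_1+u_2$ and using $|u|^{2\sigma+2}\le C(|u_1|^{2\sigma+2}+|u_2|^{2\sigma+2})$, I would control the outer term by the radial inequality \eqref{GN_radial} applied to the radial function $u_2$ (this is where $\sigma<2$ enters), together with $\|\nabla u_2\|_{L^2}\le C\|\nabla u(t)\|_{L^2}$, valid because $R(t)^{-1}\|u_0\|_{L^2}\ll\|\nabla u(t)\|_{L^2}$ as $\sigma>\tfrac{2-b}{N}$. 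The exponent in $R(t)$ is chosen precisely so that the remainder $R^{-2(\sigma(N-1)+b)/(2-\sigma)}\|u_0\|_{L^2}^{2(\sigma+2)/(2-\sigma)}$ in \eqref{GN_radial} equals a fixed negative power of $c_1$ times $\|\nabla u(t)\|_{L^2}^2$; choosing $\eta$ small and then $c_1$ large makes the outer term negligible against $P(t)$, so $\int|x|^{-b}|u_1|^{2\sigma+2}\ge c\|\nabla u(t)\|_{L^2}^2$. Finally, for the frequency split $u_1=u_{1L}+u_{1H}$, a high--frequency Bernstein inequality $\|u_{1H}\|_{L^2}\le C\rho(t)^{-1}\|\nabla u_{1H}\|_{L^2}$ combined with the Gagliardo--Nirenberg estimate $\|u_{1H}\|_{L^{\sigma_c}}\le C\|u_{1H}\|_{L^2}^{1-s_c}\|\nabla u_{1H}\|_{L^2}^{s_c}$ and with $\|\nabla u_{1H}\|_{L^2}\le C\|\nabla u(t)\|_{L^2}$ yields $\|u_{1H}\|_{L^{\sigma_c}}\le C\rho(t)^{-(1-s_c)}\|\nabla u_{1H}\|_{L^2}\le Cc_2^{-(1-s_c)}$ by the choice of $\rho(t)$; then \eqref{GNcrit} forces $\int|x|^{-b}|u_{1H}|^{2\sigma+2}\le Cc_2^{-2\sigma(1-s_c)}\|\nabla u(t)\|_{L^2}^2$, negligible for $c_2$ large. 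Hence $\int|x|^{-b}|u_{1L}|^{2\sigma+2}\ge c\|\nabla u(t)\|_{L^2}^2$ near $T$. For part (i) I then apply \eqref{GNcrit} to $u_{1L}$, using $\|\nabla u_{1L}\|_{L^2}\le C\|\nabla u_1\|_{L^2}\le C\|\nabla u(t)\|_{L^2}$:
$$\|\nabla u(t)\|_{L^2}^2\le C\int|x|^{-b}|u_{1L}|^{2\sigma+2}\le C\|\nabla u_{1L}\|_{L^2}^2\|u_{1L}\|_{L^{\sigma_c}}^{2\sigma}\le C\|\nabla u(t)\|_{L^2}^2\,\|u_{1L}\|_{L^{\sigma_c}}^{2\sigma},$$
which gives $\|u_{1L}\|_{L^{\sigma_c}}\ge c$, i.e.\ \eqref{lmtabove}.

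For part (ii) I keep only the weaker estimate $\int|x|^{-b}|u_1|^{2\sigma+2}\ge c\|\nabla u(t)\|_{L^2}^2$ from the common steps and exploit that $u_1$ is radial. By the Strauss lemma $\|u_1\|_{L^\8_{\{|x|\ge D\}}}\le C D^{-(N-1)/2}\|u_0\|_{L^2}^{1/2}\|\nabla u(t)\|_{L^2}^{1/2}$, hence
$$\int_{|x|\ge D}|x|^{-b}|u_1|^{2\sigma+2}\,dx\le D^{-b}\|u_1\|_{L^\8_{\{|x|\ge D\}}}^{2\sigma}\|u_1\|_{L^2}^2\le C D^{-(b+\sigma(N-1))}\|u_0\|_{L^2}^{\sigma+2}\|\nabla u(t)\|_{L^2}^{\sigma};$$
since $\sigma<2$ there is a choice $D=D(t)\asymp R(t)$ making the right side at most half the lower bound, so $\int_{|x|\le D}|x|^{-b}|u_1|^{2\sigma+2}\ge c\|\nabla u(t)\|_{L^2}^2$. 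Applying \eqref{GNcrit} to $\widetilde\phi u_1$, with $\widetilde\phi=1$ on $\{|x|\le D\}$, $\operatorname{supp}\widetilde\phi\subset\{|x|\le 2D\}$, $|\nabla\widetilde\phi|\le CD^{-1}$ (and $D^{-1}\|u_0\|_{L^2}\ll\|\nabla u(t)\|_{L^2}$, so $\|\nabla(\widetilde\phi u_1)\|_{L^2}\le C\|\nabla u(t)\|_{L^2}$), then yields $\|u_1\|_{L^{\sigma_c}(|x|\le 2D)}\ge c$. The remaining task is to compress this concentration from scale $D\asymp R(t)$ down to scale $\rho(t)^{-1}$, which is where the hypothesis $\|u_1(t)\|_{L^\gamma}\le c^*$, $\gamma>\sigma_c$, enters: I cover $\{|x|\le 2D\}$ by cubes $Q_j$ of side $\rho(t)^{-1}$ with centers $x_j$, set $A:=\sup_j\|u_1\|_{L^{\sigma_c}(Q_j)}$, and split the $Q_j$ according to whether $|x_j|\le\ell_0\rho(t)^{-1}$ or not ($\ell_0$ to be fixed). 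The Strauss decay (with $\sigma_c>2$) bounds $\sum_{\text{far}}\|u_1\|_{L^{\sigma_c}(Q_j)}^{\sigma_c}$ and forces it below $\tfrac12\|u_1\|_{L^{\sigma_c}(|x|\le 2D)}^{\sigma_c}$ once $\ell_0$ exceeds a suitable power of $\|\nabla u(t)\|_{L^2}$; on the remaining $\le C\ell_0^N$ near cubes, Hölder ($\sigma_c<\gamma$) and $\sum_j\|u_1\|_{L^\gamma(Q_j)}^\gamma\le(c^*)^\gamma$ give $c\le\sum_{\text{near}}\|u_1\|_{L^{\sigma_c}(Q_j)}^{\sigma_c}\le CA^{2\sigma}\times(\text{explicit powers of }\rho(t),\ \ell_0,\ c^*)$. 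Combining this with the weighted--potential--energy balance to fix $\ell_0$ in terms of $c^*$ and then solving for $A$ should yield the lower bound \eqref{itemii} with exponent $\tfrac{\sigma_c(N-b)}{N(2\sigma+2)-\sigma_c(N-b)}$, the distinguished cube playing the role of $\{|x-x_0(t)|\le\rho(t)^{-1}\}$ and satisfying $|x_0(t)|\rho(t)\le C(c^*)^{\frac{\sigma_cN(2\sigma+2)}{(N-1)(N(2\sigma+2)-\sigma_c(N-b))}}$.

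The main obstacle is this final compression step in (ii). One cannot localize $u_1$ at the frequency scale directly in physical space, since a cutoff at scale $\rho(t)^{-1}$ has gradient of size $\rho(t)\gg\|\nabla u(t)\|_{L^2}$, which is too large to absorb; the passage to a single ball of radius $\rho(t)^{-1}$ must therefore be carried out by the covering argument above, in which the number of cubes carrying a definite $L^{\sigma_c}$--mass is controlled solely through the $L^\gamma$--hypothesis while their distance to the origin is controlled solely through the radial decay ($\sigma_c>2$, Strauss) and the singular weight $|x|^{-b}$. Reconciling these two mechanisms is precisely what forces the exponents appearing in \eqref{itemii}. A secondary, purely bookkeeping, point is that $c_1$ and $c_2$ have to be chosen large once and for all, so that the spatial localization, the frequency localization, and the relation $D\asymp R(t)$ all hold with the same constants.
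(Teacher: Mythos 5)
Your treatment of part (i) is correct and follows essentially the same route as the paper: for $t$ near $T$ the potential energy dominates the kinetic energy, the outer piece $u_2$ is absorbed using the radial Gagliardo--Nirenberg inequality \eqref{GN_radial} together with the choice of $R(t)$ and the fact that $R(t)^{-1}\|u_0\|_{L^2}\ll\|\nabla u(t)\|_{L^2}$, the high-frequency piece is absorbed via the multiplier bound $|1-\widehat{\chi}(\xi/\rho)|\lesssim\min\{1,|\xi|/\rho\}$ (your Bernstein-plus-interpolation phrasing is equivalent to the paper's direct estimate of $\|u_{1H}\|_{\dot H^{s_c}}$), and inverting \eqref{GNcrit} gives \eqref{lmtabove}.

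Part (ii), however, has a genuine gap, which you yourself flag: the passage from concentration at scale $R(t)$ down to a single ball of radius $\rho(t)^{-1}$ is the entire content of the statement, and the covering argument you sketch does not deliver it. Two concrete problems. First, your near/far splitting requires $\ell_0$ to ``exceed a suitable power of $\|\nabla u(t)\|_{L^2}$,'' which blows up as $t\to T$; the number of near cubes is then unbounded in $t$, so the resulting bound on $|x_0(t)|\rho(t)$ cannot be uniform in time as the statement demands. Second, the claimed inequality $c\le\sum_{\text{near}}\|u_1\|_{L^{\sigma_c}(Q_j)}^{\sigma_c}\le CA^{2\sigma}\times(\cdots)$ is not substantiated: H\"older between $L^{\sigma_c}$ and $L^{\gamma}$ on each cube, combined with $\sum_j\|u_1\|_{L^\gamma(Q_j)}^\gamma\le (c^*)^\gamma$, does not isolate $A=\sup_j\|u_1\|_{L^{\sigma_c}(Q_j)}$ with the exponent of \eqref{itemii}. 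The paper's mechanism is different and avoids the covering entirely: it starts from \eqref{inq}, $\|\nabla u\|_{L^2}^2\lesssim\int|x|^{-b}|u_{1L}|^{2\sigma+2}$, peels off the weight by H\"older with $p=\frac{N}{N-b}\frac{\gamma}{\sigma_c}$ (so that $bp'<N$ and the exponents close), uses the hypothesis to extract the factor $(c^*)^{\gamma/p}$, and then exploits the structure $u_{1L}=\rho^N\chi(\rho\,\cdot)*u_1$ that your argument discards by working with $u_1$ alone: the $L^\infty$ norm of this mollification is a supremum of averages over balls of radius $\rho^{-1}$, so the point $x_0(t)$ where that supremum is essentially attained produces the concentration ball directly, and one more H\"older step on that single ball converts the $L^1$ average into $\|u\|_{L^{\sigma_c}(|x-x_0(t)|\le\rho(t)^{-1})}$ with exactly the stated exponent. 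The bound on $|x_0(t)|\rho(t)$ then comes from radiality alone: if it were large, the annulus $\{\,||x|-|x_0||\le\rho^{-1}\}$ would contain of order $(|x_0|\rho)^{N-1}$ disjoint balls of radius $\rho^{-1}$, each carrying the same definite $L^{\sigma_c}$ mass by symmetry, contradicting $\|u_1(t)\|_{L^\gamma}\le c^*$; this yields a bound depending only on $c^*$, not on $\|\nabla u(t)\|_{L^2}$. You should replace your compression step by this argument.
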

\begin{proof}
If $\|\nabla u(t)\|_{L^2}\to \infty$, as $t\to T$, then $$\lim_{t\to T}\frac{\left\||x|^{-\frac{b}{2\sigma+2}}u(t)\right\|_{L^{2\sigma+2}}^{2\sigma+2}}{\|\nabla u(t)\|^2_{L^2}}=\sigma + 1.$$ Hence, for $t$ close to $T$, we have \begin{align}\label{equ}
\|\nabla u(t)\|_{2}^{2}&\leq \left\||x|^{-\frac{b}{2\sigma+2}}u(t)\right\|_{L^{2\sigma+2}}\nonumber\\
&\leq c\left\||x|^{-\frac{b}{2\sigma+2}}u_{1L}(t)\right\|_{L^{2\sigma+2}}^{2\sigma+2}+c\left\||x|^{-\frac{b}{2\sigma+2}}u_{1H}(t)\right\|_{L^{2\sigma+2}}^{2\sigma+2}+c\left\||x|^{-\frac{b}{2\sigma+2}}u_{2}(t)\right\|_{L^{2\sigma+2}}^{2\sigma+2}.
\end{align}
Now, using the radial Gagliardo-Nirenberg  \eqref{GN_radial},  choosing $c_1$ small enough, we obtain
\begin{align}\label{eq_48}
c\left\||x|^{-\frac{b}{2\sigma+2}}u_{2}(t)\right\|_{L^{2\sigma+2}}^{2\sigma+2}&=c\left\||x|^{-\frac{b}{2\sigma+2}}\left(1-\phi\left(\frac{x}{R(t)}\right)\right)u(t)\right\|_{L^{2\sigma+2}}^{2\sigma+2}\nonumber\\&\leq c \int_{|x|\leq R(t)}|x|^{-b}|u(x,t)|^{2\sigma+2}\,dx\leq \frac{1}{8}\|\nabla u(t)\|_{L^2}^2+\widetilde{c}\frac{\|u_0\|_{L^2}^{\frac{2(\sigma+2)}{2-\sigma}}}{R(t)^{\frac{2(\sigma(N-1)+b)}{2-\sigma}}}
\nonumber\\&\leq \frac{\tilde{c}\,c_1}{4}\|\nabla u\|_{L^2}^2 \leq \frac{1}{4}\|\nabla u\|_{L^2}^2.
\end{align}
On the other hand, invoking the Gagliardo-Nirenberg inequality \eqref{GNcrit} and from Sobolev embedding, we get
\begin{align*}
c\left\||x|^{-\frac{b}{2\sigma+2}}u_{1H}\right\|^{L^{2\sigma+2}}_{2\sigma+2}&\leq c \|\nabla u\|_{L^2}^2\|u_{1H}\|_{L^{\sigma_c}}^{2\sigma}\leq c\|\nabla u\|_{L^2}^2\|u_{1H}\|_{\dot{H}^{s_c}}^{2\sigma}\nonumber\\
&=c\|\nabla u\|_{L^2}^2\left\||\xi|^{s_c}\widehat{u}_{1H}\right\|_{L^2}^{2\sigma}=c\left\||\xi|^{s_c}\left(1-\widehat{\chi}\left(\frac{\xi}{\rho(t)}\right)\right)\widehat{u}_1\right\|_{L^2}^{2}
\end{align*}
By the mean value theorem
$$|1-\widehat{\chi}(\xi)|\leq C\min\{1,|\xi|\}.$$
If $|\xi|\leq \rho$, then
$$|\xi|^{s_c}\left|1-\widehat{\chi}\left(\frac{\xi}{\rho}\right)\right|\leq |\xi|^{s_c}\frac{|\xi|}{\rho}\leq \frac{|\xi|}{\rho^{1-s_c}}.$$
On the other hand, if $|\xi|\geq \rho$, it is easy to see that
$$|\xi|^{s_c}\left|1-\widehat{\chi}\left(\frac{\xi}{\rho}\right)\right|\leq |\xi|^{s_c}= |\xi|^{s_c}\frac{|\xi|}{|\xi|}\leq \frac{|\xi|}{\rho^{1-s_c}}.$$
Hence, from the definition of $\rho$ and choosing $c_2$ big enough, we have
\begin{align}\label{eq_410}
c\left\||x|^{-\frac{b}{2\sigma+2}}u_{1H}\right\|_{L^{2\sigma+2}}^{2\sigma+2}&\leq c\left\|\frac{|\xi|}{\rho(t)^{1-s_c}}\widehat{u}_{1H}\right\|_{L^2}^{2\sigma}\leq c\frac{\|\nabla u\|_{L^2}^{2\sigma+2}}{\rho(t)^{2\sigma(1-s_c)}}\nonumber\\&\leq \frac{1}{4}\|\nabla u\|_{L^2}^2.
\end{align}
Therefore, in view of the inequality \eqref{equ}, \eqref{eq_48} and \eqref{eq_410}, we deduce
\begin{equation}\label{inq}
\left\|\nabla u\right\|_{L^{2}}^{2}\leq c\left\||x|^{-\frac{b}{2\sigma+2}}u_{1L}\right\|_{L^{2\sigma+2}}^{2\sigma+2}.
\end{equation}
Again using the Gagliardo-Nirenberg inequality and the above inequality, we obtain
\begin{align*}
c\left\||x|^{-\frac{b}{2\sigma+2}}u_{1L}\right\|_{L^{2\sigma+2}}^{2\sigma+2}\leq c\|\nabla u\|_{L^2}^{2}\|u_{1L}\|_{L^{\sigma_c}}^{2\sigma}\leq 2c\left\||x|^{-\frac{b}{2\sigma+2}}u_{1L}\right\|_{L^{2\sigma+2}}^{2\sigma+2}\|u_{1L}\|_{L^{\sigma_c}}^{2\sigma},
\end{align*}
that is,
$$\|u_{1L}\|_{L^{\sigma_c}}^{2\sigma}\geq \frac{1}{2c},$$
and \eqref{lmtabove} is proved.

Suppose now that there exist positive constants $c^*$ and $\gamma > \sigma_c$ such that $\|u_1(t)\|_{L^{\gamma}}\leq c^*$ for all $t\in [0,T)$. Applying \eqref{inq}, the Hölder inequality and by same argument that we have used in \eqref{contfun},
we get\begin{align*}
\|\nabla u(t)\|_{L^2}^2&\leq \left\||x|^{-\frac{b}{2\sigma+2}}u_{1L}\right\|_{L^{2\sigma+2}}^{2\sigma+2}=\int |x|^{-b}|u_{1L}(t)|^{2\sigma+2}\,dx\nonumber\\
&\leq \left(\int |x|^{-bp'}\,dx\right)^{\frac{1}{p'}}\left(\int |u_{1L}(x,t)|^{p(2\sigma+2)}\,dx\right)^{\frac{1}{p}}\nonumber\\
&\leq C_b\left(\int |u_{1L}|^{\gamma}\,dx\right)^{\frac{1}{p}}\|u(t)\|_{L^{\infty}}^{2\sigma+2-\frac{\gamma}{p}}\nonumber\\
&\leq C_b(c^*)^{\frac{\gamma}{p}}\sup_{x\in \Real^N}\left|\int \rho^N\chi(\rho(x-y))u(y,t)\,dy\right|^{2\sigma+2-\frac{\gamma}{p}}\nonumber\\
&\leq C_b(c^*)^{\frac{\gamma}{p}}\rho^{N\left(2\sigma+2-\frac{\gamma}{p}\right)}\left[\int_{|x-x_0(t)|\leq \rho(t)^{-1}}|u(x,t)|\,dx\right]^{2\sigma+2-\frac{\gamma}{p}}\nonumber\\
&\leq \omega_N^{\left(\frac{2N\sigma-2+b}{2N\sigma}\right)\left(2\sigma+2-\frac{\gamma}{p}\right)}C_b(c^*)^{\frac{\gamma}{p}}\rho^{N\left(2\sigma+2-\frac{\gamma}{p}\right)}\rho^{-N\left(\frac{2N\sigma-2+b}{2N\sigma}\right)\left(2\sigma+2-\frac{\gamma}{p}\right)}\|u\|^{2\sigma+2-\frac{\gamma}{p}}_{L^{\sigma_c}(|x-x_0(t)|\leq \rho(t)^{-1})}\nonumber\\
&=\omega_N^{\left(\frac{2N\sigma-2+b}{2N\sigma}\right)\left(2\sigma+2-\frac{\gamma}{p}\right)}C_b(c^*)^{\frac{\gamma}{p}}\rho^{\frac{2-b}{2\sigma}\left(2\sigma+2-\frac{\gamma}{p}\right)}\|u(t)\|_{L^{\sigma_c}(|x-x_0(t)|\leq \rho(t)^{-1})}^{2\sigma+2-\frac{\gamma}{p}},
\end{align*}
where $\omega_N$ is the volume of the unit ball in $\Real^{N}$, $C_b=\left\||\cdot|^{-b}\right\|_{L^{p'}}$ and $(p,p')$ is chosen so that $p \geq 1$, $\frac{1}{p'}+\frac{1}{p}=1$ and $bp'<N$. If $p=\frac{N}{N-b}\frac{\gamma}{\sigma_c}$, then it satisfies these conditions and still
$$
\frac{2-b}{2\sigma(1-s_c)}\left(2\sigma+2-\frac{\gamma}{p}\right)=2.
$$
Hence, we have the following estimates
\begin{equation*}
\|\nabla u(t)\|_{L^2}^2\leq \omega_NC_b(c^*)^{\frac{\sigma_c(N-b)}{N}}\|\nabla u\|^{2}_{L^2}\|u(t)\|_{L^{\sigma_c}(|x-x_0(t)|\leq\rho(t)^{-1})}^{2\sigma+2-\frac{\sigma_c(N-b)}{N}},
\end{equation*}
or more precisely,
$$
\|u(t)\|_{L^{\sigma_c}(|x-x_0(t)|\leq \rho(t)^{-1})}\geq \frac{c}{(c^*)^{\frac{\sigma_c(N-b)}{N(2\sigma+2)-\sigma_c(N-b)}}}.
$$
To complete the proof, suppose that there exist $t_n \to T$ such that $$|x_0(t_n)|\rho(t_n)\gg (c^*)^{\frac{\sigma_cN(2\sigma+2)}{(N-1)(N(2\sigma+2)-\sigma_c(N-b))}}.$$ Consider the annular region
$$A:=\{x\in \Real^N;\, |x_0|-\rho^{-1}\leq |x|\leq |x_0|+\rho^{-1}\}$$
that contains  $C_N\frac{|x_0|^{N-1}}{(\rho^{-1})^{N-1}}$ disjoint balls, each of radius $\rho^{-1}$, centered on the sphere at radius $|x_0|$. By the radiality assumption, on each ball $B$, we have
$$\|u_1(t_n)\|_{L^{\sigma_c}(A)}^{\sigma_c}\geq C(|x_0|\rho)^{N-1} \|u_1(t_n)\|_{L^{\sigma}(B)}\geq \frac{CM^{\sigma_c}}{(c^*)^{\frac{\sigma_c^2(N-b)}{N(2\sigma+2)-\sigma_c(N-b)}}}(|x_0|\rho)^{N-1}\gg (c^*)^{\sigma_c},$$
which contradicts the assumption $\|u_1(t)\|\leq c^*$. This completes the proof.
\end{proof}
Finally, we prove our main result of this section.
\begin{proof}[Proof of Theorem \ref{Tlpc}]
Note that from item (i) of the Proposition \ref{Plpc} and Young's inequality, we have
$$c\leq \|u_{1L}(t)\|_{L^{\sigma_c}}=\|\rho^N\chi(\rho\cdot)\ast u_1(t)\|_{L^{\sigma_c}}\leq c\|u_1(t)\|_{L^{\sigma_c}}.$$
So, it follows that $\|u_1(t)\|_{L^{\sigma_c}}$ is bounded from below. We now divide the proof into two cases.
\begin{itemize}
\item[Case 1.] If $\|u_1(t)\|_{L^{\sigma_c}}$ is not bounded (so $\| u_1(t)\|_{L^{\gamma}} $ is also unbounded for any $\gamma > \sigma_c$), then there exist a sequence $t_n\to T$ such that $\|u_1(t_n)\|_{L^{\sigma_c}}\to +\infty.$

Since
$$\|u_1(t)\|_{L^{\sigma_c}}^{\sigma_c}=\left\|\phi\left(\frac{x}{R}\right)u(t)\right\|_{L^{\sigma_c}}^{\sigma_c}\leq \int_{|x|\leq2R}|u(t)|^{\sigma_c}\,dx=\|u(t)\|_{L^{\sigma_c}_{\{|x|\leq 2R\}} },$$
 we have \eqref{inft}.
\item[Case 2.] If, on the other hand, $\| u_1(t)\|_{L^{\gamma}} \leq c^*$, for some $\gamma > \sigma_c$, then we have \eqref{itemii}. Since $|x_0(t)|\,\rho(t)\leq c(c^*)^{\frac{\sigma_cN(2\sigma+2)}{(N-1)(N(2\sigma+2)-\sigma_c(N-b))}}$, then$$
\frac{c}{(c^*)^{\frac{\sigma_c(N-b)}{N(2\sigma+2)-\sigma_c(N-b)}}}
\leq \left\|u_{1}\right\|_{L^{\sigma_c}\left(|x-x_0(t)|\leq \rho(t)^{-1}\right)}
\leq \left\|u_{1}\right\|_{L^{\sigma_c}\left(|x|\leq c(c^*)^{\frac{\sigma_cN(2\sigma+2)}{(N-1)(N(2\sigma+2)-\sigma_c(N-b))}} \rho(t)^{-1}\right)},
$$
which gives \eqref{fint}.
\end{itemize}
\end{proof}

\section{Appendix}
In this appendix we prove Theorem \ref{teo1}. First we recall a result obtained by Hmidi-Keraani \citep{keraani}.
\begin{teo}\label{profile}
Define $2^* = \infty$ if $N = 1,2$ and $2^* = \frac{2N}{N-2}$ if $N \geq 3$. Let $\left\{v_n\right\}$ be a bounded sequence in $H^1(\mathbb{R}^N)$. Then, there exists a subsequence of $\left\{v_n\right\}$ (also denoted $\left\{v_n\right\}$), a family $\left\{x^j\right\}$ of sequences in $\mathbb{R}^N$ and a sequence $\left\{V^j\right\}$ of $H^1(\mathbb{R}^N)$ functions such that
\begin{enumerate}
        \item[(i)] for every $k \neq j$, $|x_n^k - x_n^j| \xrightarrow[n \rightarrow +\infty]{} 0$
        \item[(ii)] for every $l \geq 1$ and every $x \in \mathbb{R}^N$,
\end{enumerate}

\begin{equation*}
        v_n(x) = \sum_{j=1}^l   V^j(x-x_n^j) + v^l_n(x),
\end{equation*}

with
\begin{equation}\label{ortho1}
        \limsup_{n \rightarrow +\infty} \|v_n^l\|_{L^p} \xrightarrow[l \rightarrow +\infty]{} 0,
\end{equation}
for every $p \in (2,2^*)$.

Moreover, as $n \rightarrow +\infty$,
\begin{equation}\label{ortho2}
        \|v_n\|_{L^2}^2 = \sum_{j=1}^l \|V^j\|_{L^2}^2 + \|v_n^l\|_{L^2}^2 + o(1),
\end{equation}

\begin{equation}\label{ortho3}
        \|\nabla v_n\|_{L^2}^2 = \sum_{j=1}^l \|\nabla V^j\|_{L^2}^2 + \|\nabla v_n^l\|_{L^2}^2 + o(1).
\end{equation}
\end{teo}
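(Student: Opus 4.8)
The plan is to establish the decomposition by the iterative extraction of profiles (the \emph{bubble decomposition}) in the translation-only setting, following the method of Gérard as refined by Hmidi-Keraani \citep{keraani}. All weak convergences and limits below are understood along successively extracted subsequences, and I set $M_0 = \sup_n \|v_n\|_{H^1} < \infty$. The organizing object is a dispersion functional: for any bounded sequence $\{w_n\} \subset H^1(\mathbb{R}^N)$, let $\mathcal{V}(\{w_n\})$ be the set of all weak-$H^1$ limits of translated subsequences $w_{n_k}(\cdot + y_k)$ with $\{y_k\} \subset \mathbb{R}^N$, and define
$$\eta(\{w_n\}) = \sup\{\|V\|_{L^2} : V \in \mathcal{V}(\{w_n\})\},$$
which is finite by the uniform $H^1$ bound and the weak lower semicontinuity of the norm.

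First I would extract the profiles one at a time. If $\eta(\{v_n\}) = 0$, there is nothing to extract and the decomposition holds with no profiles and $v_n^l \equiv v_n$, the conclusion \eqref{ortho1} then being exactly the vanishing lemma stated below. Otherwise I choose $V^1 \in \mathcal{V}(\{v_n\})$ with $\|V^1\|_{L^2} \ge \tfrac12 \eta(\{v_n\})$, a sequence $x_n^1$ realizing $v_n(\cdot + x_n^1) \rightharpoonup V^1$, and set $v_n^1 = v_n - V^1(\cdot - x_n^1)$. Iterating the same extraction on $\{v_n^1\}$ produces $V^2, x_n^2, v_n^2$, and so on, with $v_n^{j-1}(\cdot + x_n^{j}) \rightharpoonup V^j$ at each stage (taking $v_n^0 := v_n$), so that the $L^2$ mass of the remainder is reduced step by step.

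The orthogonality of the cores (i), namely $|x_n^k - x_n^j| \to \infty$ as $n\to\infty$ for $k \neq j$, is the delicate point and the main place where the order of extraction must be tracked. If along a subsequence $x_n^{j} - x_n^{k} \to z$ for some $j > k$ and finite $z$, then the weak limit of $v_n^{k-1}(\cdot + x_n^k)$ would already account for a translate $V^{j}(\cdot - z)$, contradicting the fact that $V^{j}$ was extracted from the remainder $v_n^{j-1}$ only \emph{after} $V^k$ had been removed. Granting (i), the identities \eqref{ortho2} and \eqref{ortho3} follow by expanding $\|v_n\|_{L^2}^2$ and $\|\nabla v_n\|_{L^2}^2$ from $v_n = \sum_{j=1}^l V^j(\cdot - x_n^j) + v_n^l$: the cross terms between distinct profiles vanish because $|x_n^j - x_n^k| \to \infty$, while the cross terms between a profile and the remainder vanish because $v_n^l(\cdot + x_n^j) \rightharpoonup 0$ for $j \le l$, which is how the remainder is built.

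From \eqref{ortho2}--\eqref{ortho3} one gets $\sum_{j\ge 1} \|V^j\|_{H^1}^2 \le M_0^2 < \infty$, hence $\|V^j\|_{H^1} \to 0$; since $\eta(\{v_n^l\}) \le 2\|V^{l+1}\|_{L^2} \le 2\|V^{l+1}\|_{H^1}$ by the choice of $V^{l+1}$, we obtain $\eta(\{v_n^l\}) \to 0$ as $l \to \infty$. The remaining ingredient, and the one I expect to be the crux, is the vanishing lemma converting smallness of the dispersion into smallness of the intermediate Lebesgue norms: for every $p \in (2, 2^*)$ there is $\theta = \theta(N,p) \in (0,1)$ such that
$$\limsup_n \|w_n\|_{L^p} \le C \, (\sup_n \|w_n\|_{H^1})^{1-\theta}\, \eta(\{w_n\})^{\theta}$$
for every bounded $\{w_n\} \subset H^1(\mathbb{R}^N)$. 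This is a refined Gagliardo-Nirenberg/Sobolev inequality in the spirit of Lieb and Gérard; its proof, via a dyadic Littlewood-Paley decomposition together with weak compactness to realize the dominant frequency block as a genuine profile, is the technical heart of the matter, and it is precisely here that the restriction $p \in (2, 2^*)$ enters. Applying this estimate to $\{v_n^l\}$ and letting $l \to \infty$ yields \eqref{ortho1}, which completes the proof.
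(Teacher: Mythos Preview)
The paper does not actually prove this theorem: it is introduced in the Appendix with the words ``we recall a result obtained by Hmidi--Keraani'' and is then used as a black box in the proof of the compactness theorem. Your sketch is a faithful outline of the standard translation-only profile decomposition in the style of G\'erard and Hmidi--Keraani---iterative extraction via a dispersion functional, orthogonality of the translation sequences, Pythagorean identities for the $L^2$ and $\dot H^1$ norms, and the refined Gagliardo--Nirenberg (vanishing) lemma as the technical core---so it matches what one would find in the cited reference. (Incidentally, item~(i) as printed in the paper contains a typo: the intended conclusion is $|x_n^k - x_n^j| \to \infty$, not $\to 0$, and you have correctly read it that way.)
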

\begin{proof}[Proof of the Theorem \ref{teo1}]
In order to use Theorem \ref{profile} in the INLS setting, we need to control the quantity $\int |x|^{-b} |v_n^l|^{2\sigma+2}$. Indeed, assume first that $N \geq 3$. Recalling the condition $\sigma < \frac{2-b}{N-2}$, we get $\frac{N-b}{N} - (\sigma + 1)\frac{N-2}{N} > 0$. Letting $0 < \epsilon < \frac{N-b}{N} - (\sigma + 1)\frac{N-2}{N}$, and choosing $\gamma$ such that $\frac{1}{\gamma} = \frac{b}{N} + \epsilon$, we conclude that $\gamma < \frac{N}{b}$ and $2 < (2\sigma +2)\gamma' < 2^*$ (where $\gamma'$ is such that $\frac{1}{\gamma} + \frac{1}{\gamma'} = 1$). Thus, by the Hölder inequality, we have
\begin{equation*}
        \int_{|x|\leq 1} |x|^{-b} |v_n^l|^{2\sigma+2} \leq \left(\int_{|x|\leq 1} |x|^{-b \gamma}\right)^\frac{1}{\gamma} \left(\int|v_n^l|^{(2\sigma+2)\gamma'} \right)^\frac{1}{\gamma'} \leq C(b,N) \|v_n^l\|_{L^{(2\sigma+2)\gamma'}}^{2\sigma+2}.
\end{equation*}

We conclude that $\displaystyle \int |x|^{-b} |v_n^l|^{2\sigma+2} \leq C(b,N) \|v_n^l\|_{L^{(2\sigma+2)\gamma'}}^{2\sigma+2}+ \int_{|x|> 1} |v_n^l|^{2\sigma+2}$ and therefore, for $2 < 2\sigma +2 < 2^*$
\begin{equation}\label{ortho4}
        \displaystyle\limsup_{n \rightarrow +\infty} \int |x|^{-b} |v_n^l|^{2\sigma+2} \xrightarrow[l \rightarrow +\infty]{} 0.
\end{equation}
The case $N= 2$ is similar, in fact, easier since $2^*=\infty$ and we omit the details.

We are now able to prove Theorem \ref{teo1}. We can extract subsequences and use $\lim$ instead of $\limsup$. According to Theorem \ref{profile} and the discussion above, the sequence $\left\{v_n\right\}$ can be written, up to a subsequence, for every $l\geq 1$, as
\begin{equation*}
        v_n(x) = \sum_{j=1}^l   V^j(x-x_n^j) + v^l_n(x),
\end{equation*}
such that (\ref{ortho1}), (\ref{ortho2}) and (\ref{ortho4}) hold. So
\begin{equation*}
        m^{2\sigma+2} \leq \limsup_{n \rightarrow +\infty} \left\||\cdot|^\frac{-b}{2\sigma+2} v_n\right\|_{L^{2\sigma+2}}^{2\sigma+2} = \limsup_{n \rightarrow +\infty} \left\| \sum_{j=1}^l|\cdot|^\frac{-b}{2\sigma+2} V^j(\cdot - x_n^j)\right\|_{L^{2\sigma+2}}^{2\sigma+2}.
\end{equation*}

Using the elementary inequality (see (1.10) in Gérard \citep{G_98})
\begin{equation*}
        \left|\left|\sum_{j=1}^l a_j \right|^{2\sigma + 2} - \sum_{j=1}^l|a_j|^{2\sigma+2}\right| \leq C \sum\limits_{\substack{j,k=1\\j\neq k}}^l |a_j||a_k|^{2\sigma+1},
\end{equation*}

the pairwise orthogonality of the family $\left\{x^j\right\}$ and Hölder's inequality, the mixed terms in right hand side vanish, then
\begin{equation*}
        m^{2\sigma+2} \leq \lim_{n \rightarrow +\infty}  \sum_{j=1}^l \left\||\cdot|^\frac{-b}{2\sigma+2} V^j(\cdot - x_n^j)\right\|_{L^{2\sigma+2}}^{2\sigma+2}.
\end{equation*}

By using the sharp Gagliardo-Nirenberg inequality (\ref{gagliardo}) and (\ref{ortho3}), we obtain
\begin{align*}
        m^{2\sigma+2} & \leq K_{opt} \sup_{j \geq 1} \left\{\|V^j\|_{L^2}^{2\sigma+2-(N\sigma+b)} \right\}\sum_{j=1}^\infty \|\nabla V^j\|_{L^2}^{N\sigma+b}\\
        & \leq K_{opt} \sup_{j \geq 1} \left\{\|V^j\|_{L^2}^{2\sigma+2-(N\sigma+b)} \right\} \sup_{k \geq 1} \left\{\|\nabla V^k\|_{L^2}^{N\sigma+b-2} \right\} \sum_{j=1}^{\infty} \|\nabla V^j\|_{L^2}^2 \\
        &\leq K_{opt} \sup_{j \geq 1} \left\{\|V^j\|_{L^2}^{2\sigma+2-(N\sigma+b)} \right\} M^{N\sigma+b-2} M^2 \\
        &= K_{opt} \sup_{j \geq 1} \left\{\|V^j\|_{L^2}^{2\sigma+2-(N\sigma+b)} \right\} M^{N\sigma+b}.
\end{align*}

Thus,
\begin{equation*}
        \sup_{j \geq 1} \left\{\|V^j\|_{L^2}^{2\sigma+2-(N\sigma+b)}\right\}\geq\frac{ m^{2\sigma+2}}{K_{opt} M^{N\sigma+b} }.
\end{equation*}

Since the series $\sum_j\|V^j\|_{L^2}^2$ converges by (\ref{ortho2}), the supremum above is attained by, say, $V^{j_0}$. Therefore,
\begin{equation*}
        \|V^{j_0}\|_{L^2}^{2\sigma+2-(N\sigma+b)} \geq \frac{m^{2\sigma+2}}{K_{opt} M^{N\sigma+b} }.
\end{equation*}

In the $L^2$-critical setting, $\sigma=\frac{2-b}{N}$, this inequality reduces to
\begin{equation*}
         \|V^{j_0}\|_{L^2}^\frac{4-2b}{N} \geq \frac{m^{\frac{4-2b}{N}}+2}{K_{opt} M^2 }. 
 \end{equation*}
Therefore, by \ref{gagliardo_opt}
\begin{equation*}
         \|V^{j_0}\|_{L^2} \geq C(M,m)\|Q\|_{L^2},
 \end{equation*}
 where $C(M,m)$ is given by \eqref{CMn}.

On the other hand, we also have
\begin{equation*}
        v_n(x+x_n^{j_0}) = V^{j_0}(x) + \sum\limits_{\substack{1 \leq j \leq l\\ j \neq j_0}} V^j(x+x_n^{j_0}-x_n^j)+ v_n^l(x+x_n^{j_0}).
\end{equation*}

The pairwise orthogonality of $\left\{x^j\right\}$ implies
\begin{equation*}
        V^j(\cdot+x_n^{j_0}-x_n^j) \rightharpoonup 0
\end{equation*}

for every $j \neq j_0$. Therefore,
\begin{equation*}
        v_n(\cdot + x_n^{j_0}) \rightharpoonup V^{j_0} + \tilde{v}^l,
\end{equation*}

where $\tilde{v}^l$ denotes the weak limit, up to a subsequence, of $\left\{v_n(\cdot + x_n^{j_0})\right\}$.

Moreover, by \eqref{ortho1}, we have
\begin{equation*}
        \|\tilde{v}^l\|_{L^{2\sigma+2}} \leq \limsup_{n \rightarrow +\infty} \|v_n^l\|_{L^{2\sigma+2}} \xrightarrow[l \rightarrow + \infty]{} 0.
\end{equation*}

Thus, by uniqueness, $\tilde{v}^l = 0$ for every $l \geq j_0$ and
\begin{equation*}
        v_n(\cdot + x_n^{j_0}) \rightharpoonup V^{j_0}.
\end{equation*}

Setting $x_n:=x_n^{j_0}$ and $V:=V^{j_0}$ we complete the proof of Theorem \ref{teo1}.

\end{proof}


\addtocontents{toc}{\protect\vspace*{\baselineskip}}


\appendix


\bibliography{biblio}
\bibliographystyle{plainnat}
\newcommand{\Addresses}{{
  \bigskip
  \footnotesize

  L. Campos, \textsc{Department of Mathematics, UFMG, Brazil}\par\nopagebreak
  \textit{E-mail address:} \texttt{me@luccascampos.com.br}

  \medskip

  M. Cardoso, \textsc{Department of Mathematics, UFMG, Brazil;}
  \textsc{Department of Mathematics, UFPI, Brazil}\par\nopagebreak
  \textit{E-mail address:} \texttt{mykael@ufpi.edu.br}

}}
\setlength{\parskip}{0pt}
\Addresses

\end{document}